\definecolor{purple}{HTML}{961C8C}
\theoremstyle{plain}
\newtheorem{theorem}{\bf Theorem}[subsection]
\newtheorem{cor}[theorem]{Corollary}
\newtheorem{lemma}[theorem]{Lemma}
\newtheorem{prp}[theorem]{Proposition}
\newtheorem{problem}[theorem]{Problem}
\newtheorem{thmnonumber}{\bf Theorem}
\theoremstyle{definition}
\newtheorem{rem}[theorem]{Remark}
\newtheorem{definition}[theorem]{Definition}
\newtheorem{example}[theorem]{Example}
\DeclareMathAlphabet{\mathpzc}{OT1}{pzc}{m}{it}
\newcommand{\conv}{\operatorname{conv} }
\renewcommand{\S}{{\hspace{.12em} \mathrm{S}}}
\newcommand{\R}{\mathbb{R}}
\newcommand{\cm}[1]{}
\newcommand\Defn[1]{\emph{\color{RubineRed}#1}}
\newcommand{\tu}{{\, {\vartriangle}\, }}
\newcommand{\td}{{\, {\triangledown}\, }}
\newcommand{\sd}{\mathrm{sd}}
\newcommand{\sm}{\hspace{.08em}}
\newcommand{\Lk}{\mathrm{Lk}\sm}
\newcommand{\St}{\mathrm{St}\sm}
\newcommand{\intx}{\mathrm{int}\sm}
\newcommand{\rint}{\mathrm{relint}\sm} 
\newcommand{\parti}{\partial \sm}
\newcommand{\cl}{\mathrm{cl}\sm}
\newcommand{\cst}{\operatorname{c-st}}
\newcommand{\csd}{\operatorname{c-sd}}
\newcommand{\RS}{\mathrm{R}}
\newcommand\F{\mathrm{F}}
\newcommand{\st}{\mathrm{st}\sm}
\newcommand{\TT}{\mathrm{T}}
\newcommand{\RN}{\mathrm{N}}
\begin{document}

\author{Karim A. Adiprasito
\thanks{This work was supported by the DFG within the research training group ``Methods for Discrete Structures'' (GRK1408) and by the Romanian NASR, CNCS – UEFISCDI, project PN-II-ID-PCE-2011-3-0533.}\\ 
\small Institut des Hautes \'Etudes Scientifiques\\ 
\small Le Bois-Marie 35, route de Chartres\\ 
\small 91440 Bures-sur-Yvette, France\\
\small \url{adiprasito@math.fu-berlin.de}
 \and
 \and
 Bruno Benedetti \thanks{Supported by the Swedish Research Council, grant ``Triangulerade M{\aa}ngfalder, Knutteori i diskrete Morseteori'', and by the DFG grant ``Discretization in Geometry and Dynamics''.} \\
 \small Institut f\" ur Informatik\\ \small Freie Universit\"at Berlin\\
 \small Takustrasse 9, 14195 Berlin\\
\small \url{bruno@zedat.fu-berlin.de}
}

\date{July 10, 2013}
\title{Subdivisions, shellability, and collapsibility of products}
\maketitle
\enlargethispage{3mm}
\begin{abstract}
We prove that the second derived subdivision of any rectilinear triangulation of any convex polytope is shellable. Also, we prove that the first derived subdivision of every rectilinear triangulation of any convex $3$-dimensional polytope is shellable. This complements Mary Ellen Rudin's classical example of a non-shellable rectilinear triangulation of the tetrahedron. Our main tool is a new relative notion of shellability that characterizes the behavior of shellable complexes under gluing. 

As a corollary, we obtain a new characterization of the PL property in terms of shellability: A triangulation of a sphere or of a ball is PL if and only if it becomes shellable after sufficiently many derived subdivisions. This improves on results by Whitehead, Zeeman and Glaser, and answers a question by Billera and Swartz. 

We also show that any contractible complex can be made collapsible by repeatedly taking products with an interval. This strengthens results by Dierker and Lickorish, and resolves a conjecture of Oliver. Finally, we give an example that this behavior extends to non-evasiveness, thereby answering a question of Welker. 
\end{abstract}

\vskip 3mm

\paragraph*{\large Shellability}\hspace{-3mm} is one of the earliest notions in combinatorial topology, cf.\ \cite{Grun, RourkeSanders, Z}. It provides a combinatorial analogue of the topological notion of the PL ball: Every shellable contractible manifold is a PL ball. However, the converse is false; some PL balls are not shellable \cite{LME}.
One of the  main goals of this paper is to show that a simple combinatorial manipulation, applied repeatedly, makes every PL ball shellable.

Our starting point is the famous \emph{non-shellable} subdivision of the tetrahedron provided by Rudin~\cite{Rudin}, which gained much influence since its publication \cite{Bing, PB, LSM, Schenzel, Stanley, VV}. This example somewhat limited the appeal of the shellability property because it demonstrates that even simple triangulations of a trivial geometry may fail to satisfy it.
In our first main result, we will argue that Rudin's ball should not discourage us too much: while subdivisions of polytopes are not shellable in general, they are very close to being shellable.

\begin{thmnonumber}[Corollary \ref{cor:s} and Theorem \ref{thm:sh3}]\label{mainthm:ShellableSub}
If $C$ is any subdivision of a convex polytope, the second derived subdivision of $C$ is shellable. If $\dim C=3$, already the first derived subdivision of $C$ is shellable.
\end{thmnonumber}

The bound is best possible at least in dimension $3$, as Rudin's ball is not shellable. For subdivisions of $d$-polytopes, $d\ge 4$, one needs either one or two derived subdivisions to guarantee shellability, and we conjecture the former to hold true. As for the case $d\le 2$, it is not hard to see that no derived subdivision is needed: every triangulation of the $d$-disk, $d\le 2$, is shellable.

Derived subdivisions preserve plenty of combinatorial properties, including collapsibility and shellability. Theorem~\ref{mainthm:ShellableSub}, which extends results in \cite{AD2013}, shows that they even induce these properties after a relatively small number of steps. In contrast, PL balls can be arbitrarily nasty when $d \ge 3$: For each $m$ and every $d\ge 3$ there is a PL $d$-ball whose $m$-th derived subdivision is still not shellable~\cite{GOO}. As a consequence of Theorem~\ref{mainthm:ShellableSub}, we obtain the following result:  

\begin{thmnonumber}[{Theorem~\ref{thm:shellyyo}}] \label{mainthm:equivalence}
A simplicial complex is PL homeomorphic to a shellable complex if and only if it becomes shellable after finitely many derived subdivisions. 
\end{thmnonumber}

In particular, a triangulated ball or sphere is PL if and only if it becomes shellable after a finite number of derived subdivisions, cf.\ Corollary~\ref{cor:characterizationPL2}. This resolves a problem by Ed Swartz and Lou Billera (personal communication). A~similar result holds if we replace ``shellable'' with ``collapsible'':

\begin{thmnonumber}[{Theorem~\ref{thm:zeemanyes}}] \label{mainthm:equivalencec}
A simplicial complex is PL homeomorphic to a collapsible complex if and only if it becomes collapsible after finitely many derived subdivisions. 
\end{thmnonumber}

The two results simplify and generalize some foundational results in PL topology: Whitehead \cite[Thm.~7]{Whitehead} and Glaser \cite[pp.\ 58--69]{Glaser} proved that every PL ball admits a collapsible (iterated) stellar subdivision, and it is a fundamental result of Whitehead \cite[Thm.\ 3]{Whitehead} that this property characterizes PL balls among all PL manifolds. Theorem~\ref{mainthm:equivalencec} shows that stellar subdivisions can be replaced by the subclass of derived subdivisions.
Similarly, one can use Theorem~\ref{mainthm:equivalence} to characterize the PL notion for manifolds: Theorem~\ref{mainthm:equivalence} shows that a homology manifold $M$ is PL if and only if all vertex links of some iterated subdivision of $M$ are shellable, cf.\ Corollary~\ref{cor:characterizationPL2}.

Finally, we turn to a problem motivated by the Zeeman conjecture, which claims that the product of any contractible $2$-dimensional complex $C$ with the interval $[0,1]$ is PL homeomorphic to a collapsible complex. This begs the question: 
Is it true that every contractible polytopal $2$-complex, after multiplication with the interval $\mathrm{I}:=[0,1]$, becomes collapsible itself? We provide the following answer.

\begin{thmnonumber}[Cor.~\ref{cor:oliver}]\label{mainthm:oliver}
For any contractible complex $C$ there is an $n\ge 0$ such that $C \times \mathrm{I}^n$~is~collapsible.
\end{thmnonumber}\enlargethispage{3mm}

Contrary to the case of derived subdivisions, the analogous theorem for shellability does not hold (cf.\ Remark \ref{rem:shelloliver}). Theorem \ref{mainthm:oliver} was already conjectured by Bob Oliver in a 1998 written correspondence to Anders Bj\"orner. It furthermore strengthens results by Dierker \cite{Dierker} and Lickorish \cite[Lem.\ 1]{Lickorish} who proved that for every contractible complex, there is an $n\ge 0$ such that $C\times \mathrm{I}^n$ is PL homeomorphic to a collapsible complex. Using the intuition of Theorem \ref{mainthm:oliver}, we also give a concrete example of a evasive poset that becomes non-evasive when multiplied with the two-element total, thereby solving a question by Welker \cite[Open Problem~2]{Welker}, cf.\ Proposition~\ref{prop:orderComplex}.

To this day the Zeeman conjecture remains open, although some related problems have been solved \cite{Cohen, EDG} and the Poincar\'e--Hamilton--Perelman Theorem implies the Zeeman conjecture for a wide class of complexes \cite{Gillman}.  Via Theorem~\ref{mainthm:equivalencec}, we can rephrase it in a rather concrete way: The Zeeman conjecture is equivalent to the claim that for any contractible $2$-complex $C$ there is an $n\ge 0$ such that $\sd^n (C \times \mathrm{I})$ is collapsible (Cor.~\ref{prop:CohenBary}). The answer to our original question, however, is negative: For~any $m, n \ge 0$, we can describe a contractible $2$-complex $C_{m,n}$ such that the $m$-th derived subdivision of $C_{m,n} \times \mathrm{I}^n$ is \emph{not} collapsible (Cor.~\ref{cor:zeemanno})

\paragraph*{\large Notation.}
For polytopes and polytopal complexes, we follow the standard notation in the literature \cite{Grun, RourkeSanders, Z}, see also \cite{AD2013}. We use $\conv X$ to denote the \Defn{convex hull} of a set $X$, and $\cl X$, $\intx X$, $\rint X$ and $\parti X$ shall denote the \Defn{closure}, \Defn{interior}, \Defn{relative interior} and \Defn{boundary} of $X$ respectively.

By $\R^d$ and $S^d$ we denote the euclidean $d$-space and the unit sphere in $\R^{d+1}$, respectively. A \Defn{(euclidean) polytope} in $\R^d$ is the convex hull of finitely many points in $\R^d$. Similarly, a \Defn{spherical polytope} in $S^d$ is the convex hull of a finite number of points that all belong to some open hemisphere of $S^d$. Spherical polytopes are in natural one-to-one correspondence with euclidean polytopes via radial projections. A \Defn{polytopal complex} in $\R^d$ (resp.\ in $S^d$) is a finite collection of polytopes in $\R^d$ (resp.~$S^d$) that is closed under passing to faces of its elements and such that the intersection of any two polytopes is a face of both. 

A polytopal complex is \Defn{pure} if all its facets are of the same dimension. Two polytopal complexes $C,\, D$ are \Defn{combinatorially equivalent}, denoted by $C\cong D$, if their face posets are isomorphic. A polytopal complex combinatorially equivalent to $C$ is also called a \Defn{realization} of $C$. A polytopal complex is \Defn{simplicial} if all its faces are simplices. The set of $k$-dimensional faces of a polytopal complex $C$ is denoted by $\F_k(C)$, and the cardinality of this set is denoted by $f_k(C).$ If $C$ is a pure polytopal $d$-complex, then a \Defn{principal} subcomplex is any pure $d$-dimensional subcomplex or the empty complex.

The \Defn{underlying space} $|C|$ of a polytopal complex $C$ is the union of its faces. We will frequently abuse notation and identify a polytopal complex with its underlying space, as common in the literature. For instance, we do not distinguish between a polytope and the complex formed by its faces.

If $C$ is a polytopal complex, and $A$ is some set, we define the \Defn{restriction} $\RS(C,A)$ of $C$ to $A$ as the inclusion-maximal subcomplex $D$ of $C$ such that $D$ lies in $A$. The \Defn{star} of $\sigma$ in $C$, denoted by $\St(\sigma, C)$, is the minimal subcomplex of $C$ that contains all faces of $C$ containing $\sigma$. The \Defn{deletion} $C-D$ of a subcomplex $D$ from $C$ is the subcomplex of $C$ given by $\RS(C,  C{\sm\setminus\sm} \rint{D})$. 

Next, we define links with a differential-geometric approach, cf.\ \cite{AD2013}, \cite[Sec.\ 2.2]{DM-NP}. Let $p$ be any point of a metric space $X$. By $\TT_p X$ we denote the tangent space of $X$ at $p$. Let $\TT^1_p X$ be the restriction of $\TT_p X$ to unit vectors.  If $Y$ is any subspace of $X$, then $\RN_{(p,Y)} X$ denotes the subspace of the tangent space spanned by vectors orthogonal to $\TT_p Y \subset \TT_p X$, and we define $\RN^1_{(p,Y)} X:= \RN_{(p,Y)} X \cap \TT^1_p Y$.

If $\tau$ is any face of a polytopal complex $C$ in $X^d=\R^d$ (or $X^d=S^d$) containing a nonempty face $\sigma$ of $C$, then $\RN^1_{(p,\sigma)} \tau$ forms a spherical polytope isometrically embedded in $\RN^1_{(p,\sigma)} |C|\subset \RN^1_{(p,\sigma)} X^d$. Here, the space $\RN^1_{(p,\sigma)} X^d$ is isometric to a sphere of dimension $n-\dim \sigma -1$, and will be considered as such. The collection of all polytopes in $\RN^1_{(p,\sigma)} |C|$ obtained this way forms a polytopal complex denoted by $\Lk_p(\sigma, C)$, the \Defn{link} of $C$ at $\sigma$. Up to ambient isometry $\Lk_p(\sigma, C)$ and  $\RN^1_{(p,\sigma)} \tau$ in $ \RN^1_{(p,\sigma)} |C|\subset \RN^1_{(p,\sigma)} X^d$ do not depend on $p$, thus, the base-point $p$ will be omitted in notation whenever possible. By convention, we set $\Lk(\varnothing, C):=C$. If $C$ is simplicial, and $v$ is a vertex of $C$, then $\Lk(v,C)\cong(C-v)\cap \St(v,C)=\St(v,C)-v.$

Let $C$, $D$ be polytopal complexes, and let $C'$ and $D'$ be realizations of $C$ and $D$ in skew affine subspaces of $\R^d$. A \Defn{join} $C\ast D$ of $C$ and $D$ is the complex consisting of faces $\conv (\sigma \cup \tau),\ \sigma \in C',\ \tau \in D'.$
This is well-defined up to combinatorial equivalence. Caution: the join may not be realizable using $C$ and $D$ themselves: If $C$ and $D$ are complexes in $\R^2$ containing at least $3$ vertices each, then $C\ast D$ contains a $K_{3,3}$ and is therefore not realizable in $\R^2$. If $C$ is a simplicial complex, and $\sigma$, $\tau$ are faces of $C$, then $\sigma\ast \tau$ is combinatorially equivalent to the minimal face of $C$ containing both $\sigma$ and $\tau$ (assuming it exists), hence they will be identified. If $\sigma$ is a face of a simplicial complex $C$, and $\tau$ is a face of $\Lk(\sigma,C)$, then $\sigma \ast \tau$ is combinatorially equivalent to the face $\chi$ of $C$ with $\Lk(\sigma,\chi)=\tau$. Again, we will simply denote $\chi$ by $\sigma \ast \tau$.

Analogously, if $C$ and $D$ are polytopal complexes, and $C'$ and $D'$ are realizations of $C$ and $D$ in orthogonal subspaces of $\R^d$, then a \Defn{product} $C\times D$ of $C$ and $D$ is given as the complex consisting of faces $ \{(x,y)\in \R^d: x\in \tau,\, y\in \sigma\},\ \sigma \in C',\ \tau \in D'.$ Similar to the join, the product of two polytopal complexes is defined up to combinatorial equivalence only.

An \Defn{elementary collapse} is the deletion of a \Defn{free} face $\sigma$ from a polytopal complex~$C$, i.e.\ the deletion of a nonempty face $\sigma$ of $C$ that is strictly contained in only one other face of $C$. In this situation, we say that $C$ \Defn{(elementarily) collapses} onto $C-\sigma$, and write $C\searrow_{\mathrm{e}} C-\sigma.$ More generally, we say that the complex $C$ \Defn{collapses} to a subcomplex $C'$, and write~$C\searrow C'$, if $C$ can be reduced to $C'$ by a sequence of elementary collapses. A \Defn{collapsible} complex is a complex that collapses onto a single vertex.

Let $C$ be a pure polytopal complex with $N$ facets. If $\dim C=0$, a \Defn{shelling} of $C$ is any ordering of its vertices. If $\dim C = d >0$, a \Defn{shelling} for $C$ is an ordering $(F_1, \cdots, F_n)$, the \Defn{shelling order}, of its facets such that for each $i \in \{1, \ldots,n-1\}$ the complex $B:=F_i \cap \bigcup_{j={i+1}}^{n} F_j$ is pure $(d-1)$-dimensional and both $B$ and $\partial F_i-B$ are shellable or empty\footnote{We will see in Corollary~\ref{cor:eq} that this is equivalent to the definition of shellability classically used, cf.\ \cite[Sec.\ 8]{Z}.}.  A complex $C$ is \Defn{shellable} if it has a shelling. 

A \Defn{subdivision} of a polytopal complex $C$ is a polytopal complex $C'$ with the same underlying space as~$C$, such that for every face $F'$ of $C'$ there is some face $F$ of~$C$ for which $F' \subset F$. A \Defn{subdivision} of a set $M$ is a polytopal complex whose underlying space is $M$. Now, let $C$ denote any polytopal complex, and let $\tau$ denote any face of $C$. Let $v_\tau$ denote a point anywhere in the relative interior of $\tau$. Define
\[
\st(\tau,C):=(C-\tau) \cup \{\conv (\{v_\tau\}\cup \sigma) : \sigma \in \St(\tau,C)-\tau \}.
\]
The complex $\st(\tau,C)$ is a \Defn{stellar subdivision} of $C$ at $\tau$. A \Defn{derived subdivision} $\sd\sm C$ of a polytopal complex $C$ is any subdivision of $C$ obtained by stellarly subdividing at all faces in order of decreasing dimension of the faces of $C$, cf.\ \cite{Hudson}. 
  
Finally, recall that two polytopal complexes $C$ and $D$ are called \Defn{PL equivalent} (or \Defn{PL homeomorphic})  if some subdivision $C'$ of $C$ is combinatorially equivalent to some subdivision $D'$ of $D$. A \Defn{PL $d$-ball} is any polytopal complex that is PL homeomorphic to the $d$-simplex. A \Defn{PL $d$-sphere} is a polytopal complex that is PL homeomorphic to the boundary of the $(d+1)$-simplex. A polytopal complex $C$ is a \Defn{PL $d$-manifold} if for each vertex $v$ of~$C$ the complex $\Lk(v,C)$ is a PL $(d-1)$-sphere or a PL $(d-1)$-ball. 

\section{Derived subdivisions and shellability}
Subdivisions of $3$-polytopes are always collapsible \cite{CHIL}, but not always shellable \cite{Rudin}. In the second part of this section, we reconcile this by showing that the second derived subdivision of any convex $d$-complex is shellable. In the third part of this section, we draw some consequences for PL topology from Theorem~\ref{mainthm:ShellableSub}, among them Theorem~\ref{mainthm:equivalence}. We start by providing a theory of relative shellings.

\subsection{Shellability at the boundary}

The proof of Theorem~\ref{mainthm:ShellableSub}, in its key ideas, follows Chapter $2$ of the first author's thesis \cite{AD2013}, see also \cite{KarimBrunoMG&C}. However, shellability is a rather restrictive property, so that the proof is quite involved. We will therefore introduce some concepts first, among them the notion of relative shellings.

\paragraph*{Shelling sequences and relative shellings.} Let $C$ be any complex, and let $\tau$ be any face of $C$. We denote by $C\td \tau$ the complex $\RS(C, \cl(C{\sm\setminus\sm}\St(\tau,C)))$, the \Defn{removal} of $\tau$ from $C$. If $D\subset C$ is a subcomplex, we define \[C\td D=\bigcap_{\tau\ \text{facet of}\ D} C\td \tau.\]

\begin{definition}[Relative shellings, shelling sequences and shelling orders] \label{def:seq} Let $C$ be a polytopal $d$-complex, and let $D$ be a $(d-1)$-dimensional complex such that $D\cup C$ is a polytopal complex. Assume that $d (f_d(C)-1) > 0$, and let $P$ be any facet of $C$. The removal $C\rightarrow C':=C \td P$ is a \Defn{shelling step relative to~$D$} if $P\cap (C'\cup D)$ is pure of dimension $d-1$ and the complexes $P\cap (C'\cup D)$ and $\parti P \td (P\cap (C'\cup D))$ are shellable or empty. Given two pure polytopal $d$-complexes $C$ and $C'$ then \Defn{$C$  shells to $C'$ relative to $D$}, and write $C \searrow^D_{\S} C'$, if there is a sequence of shelling steps (relative to $D$) which deform $C$ to $C'$. 

The intermediate complexes between $C$ and $C'$ form a sequence of subcomplexes $(C=C_1,\cdots,C_n=C')$, which we call \Defn{shelling sequence relative to $D$}. This is in contrast to the \Defn{shelling order relative to $D$}, which is a list $(F_1, F_{2}, \cdots, F_n)$ of facets of $C$ in their order of removal from $C$.

If $D=\varnothing$, we abbreviate this to $C \searrow_{\S} C'$, and simply say $C$ \Defn{shells} to $C'$.  A complex $C$ is \Defn{shellable} if and only if $C$ shells to some complex $\widecheck{C}$ with $d (f_d(\widecheck{C})-1) = 0$. We will write this simply as \[C \searrow^\varnothing_{\S} \varnothing\quad \text{or even simpler} \quad C \searrow_{\S} \varnothing.\] 
\end{definition}

Note that shellings of simplicial complexes are defined in a much simpler way, due to the fact that any principal nonempty subcomplex $C$ of $\partial \Delta$, where $\Delta$ is any simplex, is shellable.

\begin{lemma} \label{lem:connsum} 
Let $B$ and $C$ be two pure polytopal $d$-complexes that intersect in a complex $D$, and let $D'\subset C$ be a complex containing $D$. The following are equivalent:
\begin{compactenum}[(a)]
\item $C\searrow^{D'}_{\S}C'$ via the shelling sequence $(C_i)$.
\item  $B \cup C \searrow^{D'\td D}_{\S} B\cup C'$ via the shelling sequence $(B\cup C_i)$. \qed
\end{compactenum} 
\end{lemma}

Hence, the notion of relative shellings gives us a way to glue shellable complexes to obtain larger shellable complexes. This is not trivial, and not always possible: For example, Rudin's ball is the union of two shellable $3$-balls $B$ and $C$, glued together at a (shellable) $2$-ball $D$ in their boundary. However, neither $B$ nor $C$ are shellable relative to $D$, since Rudin's ball is not shellable.

\begin{lemma} \label{lem:cone}
Let $C$ be simplicial complex with a shelling sequence $(C_i)$ associated to a shelling relative to a complex $D$. Then $(v\ast C_i)$ is a shelling sequence relative to $v\ast D$. \qed
\end{lemma}

To generalize this, let us say that if $(C_i)$ is shelling sequence relative to a complex $D$, then the shelling \Defn{restricts} to a principal subcomplex $E$ of $C_1$ if $(E_i=C_i\cap E)$ is a shelling sequence relative to~$D$.

\begin{lemma} \label{lem:cone2}
Let $C$ be simplicial complex with a shelling $(C_i)$ that restricts to a subcomplex $E\subset C$. Then $(v\ast C_i)$ is a shelling sequence of $v\ast C$ relative to $E\cup (v\ast D)$. \qed
\end{lemma}

If $C$ is a simplicial complex, and $v$ is one of its vertices, define $C \tu v$ as the maximal subcomplex of $C$ with $(C\tu \tau)-\tau= C\td \tau$. Then $\Lk(\tau, C\tu\tau)\cong \St(\tau,C)\cap C\td D$.
Combining Lemmas~\ref{lem:cone2} and~\ref{lem:connsum}, we obtain the following corollary:

\begin{cor}\label{cor:linkshell}
If $C$ is a simplicial complex, $v$ is a vertex of $C$, and if $\Lk(v,C)$ has a shelling relative to a complex $D$ that restricts to ${\Lk(v,C \tu v)}$, then $C\searrow^{v\ast D}_\S C\td v$. In particular, if $\Lk(v,C)\searrow^D_\S \Lk(v,C \tu v)$, and the latter complex is shellable, then $C\searrow^{v\ast D}_\S C\td v$. 
\end{cor}

\paragraph*{Reverse shellings and duality.} To illustrate the notion of relative shellings further, we notice the following simple fact.

\begin{prp}
Let $C$ denote a simplical ball with a shelling sequence that is simultaneously a shelling relative to $\varnothing$ and $\parti  C$. Then $\parti  C$ is shellable.
\end{prp}
 \begin{proof}
Let $(C_i)$ denote the shelling sequence that is a shelling and a $\parti  C$ shelling. Then $(C_i\cap \parti C)$ is a sequence of subcomplexes of $\parti C$. If elements of this sequence appear more than once, we delete their later occurrences from the sequence. We are left with a sequence $(C'_i)$ of simplicial subcomplexes of $\parti C$ which can be refined to a shelling sequence for $\parti C$, for instance by applying Lemma \ref{lem:bary} below.
\end{proof}

As a consequence we observe, once again, that a shelling of $C$ is not automatically a shelling relative to $\parti  C$ (and vice versa). 
\begin{example}
Let $S$ be any PL sphere that is not shellable, for instance one of the spheres of Lickorish~\cite{LME}. Then $S$ is the boundary of some shellable ball $B$ by Pachner's Theorem \cite{Pachner}. By the previous proposition, any shelling of $B$ is not a shelling relative to $\parti  B=S$.
\end{example}

This does not mean that a shellable ball $B$ does not admit a shelling relative to $\parti  B$, it might just not be the same shelling, as we shall see below. 

\begin{prp}[Alexander duality for relative shellings]\label{prp:rev}
Let $C$ be a polytopal $d$-ball with boundary $\parti  C$, and consider a principal subcomplex $D$ of $\parti  C$. If $C$ is shellable relative to $D$, then $C$ is shellable relative to ${D':=\parti  C\td D}$.
\end{prp}

\begin{proof}
Let $n:=f_d(C)$ be the number of facets of $C$.
If $(F_1, F_{2}, \cdots, F_{n})$ is the shelling order for $C$ relative to $D$, then we claim that enumerating the facets by $G_i:= F_{n+1-i}$ gives the desired shelling order $(G_1, \cdots G_{n})$ of $C$ relative to~$D'$. We say $(G_i)$ is the shelling order of the \Defn{reverse shelling} to the shelling given by $(F_1, F_{2}, \cdots, F_{n})$.

Let us argue why $(G_1, \cdots G_{n})$ is indeed associated to a shelling relative to $D'$ let $G_i= F_{n+1-i}$ denote any facet of the shelling order $(G_i)$. Let $(C_i)$ denote the shelling sequence associated to the shelling with order $(F_i)$, and let $(C'_i)$ the shelling sequence associated with the shelling with order $(G_i)$, where $C_{n+1}=C'_{n+1}=\varnothing$. With this notation
\[F_{n+1-i}\cap (C_{n+2-1}\cup D)\ \ \text{and}\ \ \parti F_{n+1-i}\td \left(F_{n+1-i}\cap (C_{n+2-i}\cup D)\right),\ \ 1\le i \le n,\] are pure $(d-1)$-dimensional and shellable by assumption. Since \[G_i\cap (C'_{i+1}\cup D')=\parti G_i-\left(G_i \cap (C_{n+2-i}\cup D)\right)=\parti F_{n+1-i}\td \left(F_{n+1-i}\cap (C_{n+2-i}\cup D)\right) \]
and similarly \[\parti G_i\td \left(G_i\cap (C'_{i+1}\cup D')\right)=F_{n+1-i}\cap (C_{n+2-i}\cup D),\] the removal of $G_i$ from $C'_i$ is a valid shelling step relative to $D'$.
\end{proof}

\begin{cor}\label{cor:eq}
Let $C$ denote a PL sphere, and let $A$ and $B=C\td A$ denote shellable subcomplexes of $C$. Then $A$ is the ``beginning segment'' of a shelling of $C$, i.e. \[C\searrow_\S C\td A \searrow_\S \varnothing. \qedhere\]
\end{cor}

This shows the equivalence of our notion of shellability and the one classically used, cf.\ \cite{BT, Z}.

\begin{rem}
Topologically, the scope of Proposition~\ref{prp:rev} is rather limited: If $C$ is a polytopal ball, and $C\searrow_\S^D \varnothing$, where $D$ is a principal subcomplex of $\parti C$, then either $D=\parti  C,\ D=\varnothing$ or $D$ is a PL ball in $\parti  C$. 
\end{rem}

Finally, we recall some classical results on shellings. If $C$ is a polytopal complex, and $\tau$ is not an element of $C$, then we define the stellar subdivision of $C$ at $\tau$ as $\st ({\tau,C})=C$.

\begin{lemma}[cf.\ {\v{C}}uki{\'c}--Delucchi \cite{CD}]\label{lem:bary}
If $C$ is a simplicial complex that shells to subcomplex $C'$ relative to a complex $D$. Then any stellar subdivision $\st ({\tau,C})$ of $C$ shells to the corresponding stellar subdivision  $\st ({\tau,C'})$ relative to $\st ({\tau,D})$.\qed
\end{lemma}

It is not hard to see that shellability is also preserved under passing to links.
\begin{prp}[cf.\ Lem.\ 8.7 \cite{Z}]
If $C\searrow_\S^D C'$ for any complexes $C$, $C'$ and $D$, and $\tau$ is any face of $C$, then $\Lk(\tau,C)\searrow_\S^{\Lk(\tau,D)} \Lk(\tau,C')$. \qed
\end{prp}

As consequence of Lemma~\ref{lem:bary}, we see that shellability is preserved under derived subdivisions. Next, we present a lemma for joins of shellable complexes.

\begin{lemma}[cf.\ Bj\"orner--Wachs \cite{BW}]\label{lem:jshell}
Given any two simplicial complexes $A$, $A'$, $B$, $B'$ and $D$, the following are equivalent:
\begin{compactenum}[(1)]
\item $A\searrow_\S A'$ and $B\searrow_\S^D B'$
\item $A \ast B\searrow_\S^{A\ast D} A'\ast B'$.
\end{compactenum}
\end{lemma}

\begin{proof}
We will only treat the case $(1) \Rightarrow (2)$ here, as it will be useful later to see how the desired shelling sequence is obtained. For the case $(2) \Rightarrow (1)$, we refer the reader to \cite{BW}. 

Now, if $A_1, A_2, \cdots A_j$ is the associated shelling sequence for $A$, and $B_1, B_2, \cdots B_i$ is the associated shelling sequence for $B$, then 
\begin{align*}
\big(A\ast B{=}&A_1\ast B_1{=}(A_1\ast B_2)\cup (A_1\ast (B_2\td B_1)),\cdots, (A_1\ast B_2)\cup (A_j\ast (B_2\td B_1)),\\ 
&A_1\ast B_2{=}(A_2\ast B_3)\cup (A_1\ast (B_3\td B_2)),\cdots , (A_1\ast B_3)\cup (A_j\ast (B_3\td B_2)), \\ 
&\qquad\qquad\qquad\qquad\vdots \qquad\qquad\qquad\qquad \vdots\qquad\qquad\qquad\qquad \vdots \\
&\qquad\qquad\qquad\qquad\vdots \qquad\qquad\qquad\qquad \vdots \qquad\qquad\qquad\qquad \vdots  \\
&A_1\ast B_{i-1}{=}(A_1\ast B_i)\cup (A_1\ast (B_{i-1}\td B_{i})),\cdots, (A_j\ast B_{i})\cup (A_j\ast (B_{i-1}\td B_{i})),A_1\ast B_{i}{=}A\ast B' \big)
\end{align*}
shells $A \ast B$ to $A \ast B'$ relative to $A\ast D$. Analogously, we can continue the shelling of $A \ast B'$ to $A' \ast B'$.
\end{proof}

Finally, recall that a \Defn{path} of $d$-simplices is a simplicial complex homeomorphic to a ball whose dual graph is a path. 

\begin{lemma}\label{lem:path}
A path of $d$-simplices is shellable to any of its facets.
\end{lemma}

\begin{proof}
Iteratively remove the leaves of the path; since up to the last step there are always at least $2$ leaves, we can choose to leave any chosen facet untouched by the shelling.
\end{proof}

\paragraph*{Derived orders and derived neighborhoods.} We recall two notions related to derived subdivisions; the first is taken from \cite{AD2013}, the second is classical.

\begin{definition}[Derived order]\label{def:extord}
An \Defn{extension} of a partial order $\prec$ on a set $S$ is any partial order $\widetilde{\prec}$ on any superset $T$ of $S$ such that $a\, \widetilde{\prec}\, b$ whenever $a \prec b$ for $a,b\in S$.

Let now $C$ be a polytopal complex, let $S$ denote a subset of its faces, and let $\prec$ denote any strict total order on $S$ with the property that $a\prec b$ whenever $b\subset a$. We extend this order to an irreflexive partial order $\widetilde{\prec}$ on $C$ as follows: Let $\sigma$ be any face of $C$, and let $\tau\subsetneq \sigma$ be any strict face of $\sigma$. 
\begin{compactitem}[$\circ$]
\item If $\tau$ is the minimal face of $\sigma$ under $\prec$, then $\tau\,\widetilde{\prec}\, \sigma$.
\item If $\tau$ is any other face of $\sigma$, then $\sigma\, \widetilde{\prec}\, \tau$.
\end{compactitem}
The transitive closure of the relation $\widetilde{\prec}$ gives an irreflexive partial order on the faces of $C$, and by the correspondence of faces of $C$ to the vertices of $\sd\sm C$, it gives an irreflexive partial order on $\F_0(\sd\sm C)$. Any strict total order that extends the latter order is a \Defn{derived order} of $\F_0(\sd\sm C)$ induced by $\prec$.
\end{definition}

\begin{definition}[Derived neighborhoods, cf.\ \cite{ZeemanBK, Hudson}]
Let $C$ be a polytopal complex. Let $D$ be a set of faces of $C$ (for instance a subcomplex of $C$). The \Defn{(first) derived neighborhood} $N(D,C)$ of $D$ in $C$ is the polytopal complex
\[N(D,C):=\bigcup_{\sigma\in \sd\sm D} \St(\sigma,\sd\sm C).\]
\end{definition}

\begin{prp}\label{prp:js}
Let $K$ and $\varLambda$ denote simplicial complexes, and let $V$ denote a subset of the vertex set of $\varLambda$. Assume $\sd\sm K$ is shellable relative to a subcomplex $\sd\sm D$, $D\subset K$, and that \[\sd\sm \varLambda\searrow_\S N(V,\varLambda)\searrow_\S \varnothing.\] Then $\sd\sm (K\ast \varLambda)$ has a shelling relative to $\sd\sm (D\ast \varLambda)$ that restricts to ${N(K\ast V, K\ast \varLambda)}$.
\end{prp}

\begin{proof} We abbreviate $X:=\sd\sm K \ast \sd\sm \varLambda$, $\varUpsilon:=\sd\sm (K \ast \varLambda )$, $\chi:=\sd\sm D\ast \sd\sm \varLambda$ and $\upsilon:=\sd\sm (D\ast   \varLambda)$. 
First, notice that by Lemma~\ref{lem:jshell} \[\sd\sm K \ast \sd\sm \varLambda=X \searrow^{\chi}_\S\sd\sm K \ast N(V,\varLambda)\searrow^{\chi}_\S \varnothing.\]
Let $e$ be any edge of $X$. If the vertices of $e$ correspond to faces of dimension $k$ of $K$ and $\lambda$ of $\varLambda$, respectively, then let us define the \Defn{height} of $e$ as $k+\lambda$. Perform stellar subdivisions at the edges of $\sd\sm K \ast \sd\sm \varLambda$ not in $\sd\sm K \cup \sd\sm \varLambda$ in order of decreasing height of the edges. The resulting complex is combinatorially equivalent to $\sd\sm (K \ast \varLambda)$ and is therefore identified with the latter.

Since stellar subdivisions preserve shellability (Lemma~\ref{lem:bary}) we obtain that whenever \[X=A_1, A_2, \cdots, A_j , \sd\sm K \ast N(V,\varLambda)=B_1,B_2,\cdots \] is a shelling sequence (relative to $\chi$) for $X$, then 
\[\RS(\varUpsilon,A_1),\RS(\varUpsilon,A_2),\cdots, \RS(\varUpsilon,A_j), \RS(\varUpsilon,B_1), \RS(\varUpsilon,B_2),\cdots\]
can be refined to a shelling sequence (relative to $\upsilon$) of $\varUpsilon$. It remains to describe such a shelling that restricts to ${N(K\ast V, K\ast \varLambda)}$. This can be done as follows: 
Set $\varDelta_i:= A_i \td A_{i+1}$. Then $\RS(\varUpsilon,\varDelta_i)$ is a path of $m:=(\dim K+1)\cdot(\dim \varLambda+1)$ facets of $\varUpsilon$. Using Lemma~\ref{lem:path}, shell $\RS(\varUpsilon,\varDelta_i)$ to the unique facet of $\RS(\varUpsilon,\varDelta_i)$ containing $\varDelta_i \cap \sd\sm K$, and let $\varDelta_{i,1}, \varDelta_{i,2}, \cdots, \varDelta_{i,m}$ denote the associated shelling sequence. Then we shell 
$\varUpsilon$ relative to $\upsilon$ using the shelling sequence
\begin{align*}
\big(\varUpsilon=&\RS(\varUpsilon,A_1)=\RS(\varUpsilon,A_2)\cup\varDelta_{1,1},\RS(\varUpsilon,A_2)\cup\varDelta_{1,2},\cdots, \RS(\varUpsilon,A_2)\cup\varDelta_{1,m},\\  
&\RS(\varUpsilon,A_2)=\RS(\varUpsilon,A_3)\cup\varDelta_{2,1},\RS(\varUpsilon,A_3)\cup\varDelta_{2,2},\cdots, \RS(\varUpsilon,A_3)\cup\varDelta_{2,m},\\
&\qquad\qquad\qquad\vdots\qquad\qquad\ \qquad \vdots\qquad\qquad\ \qquad \vdots\\
&\qquad\qquad\qquad\vdots\qquad\qquad\ \qquad \vdots\qquad\qquad\ \qquad \vdots\\
&\RS(\varUpsilon,A_j)=\RS(\varUpsilon,B_1)\cup\varDelta_{j,1},\RS(\varUpsilon,B_1)\cup\varDelta_{j,2},\cdots, \RS(\varUpsilon,B_1)\cup\varDelta_{j,m},\RS(\varUpsilon,B_1) \big) 
\end{align*}
This shells $\varUpsilon$ to $\RS(\varUpsilon,B_1)$ relative to $\upsilon$. It remains to show that $\RS(\varUpsilon,B_1)$ is shellable relative to $\upsilon$; whatever shelling we choose for this will automatically restrict to ${N(K\ast V, K\ast \varLambda)}$ since $\RS(\varUpsilon,B_1)\subset {N(K\ast V, K\ast \varLambda)}$. Hence, we can finish the proof by applying Lemma~\ref{lem:bary}, since $B_1$ is shellable relative to $\chi$.
\end{proof}

\subsection{Shellability of convex balls}

We can now prove Main Theorem~\ref{mainthm:ShellableSub}. We first prove that any subdivision of a convex polytope (of arbitrary dimension) is shellable after $2$ derived subdivisions; afterwards, we argue that $1$ derived subdivision suffices for $d\le 3$. The essence of the proof can be summarized as follows: 

\begin{compactitem}[$\circ$]
\item For general $d$, we start with a derived subdivision in order to be able to order the vertices nicely; for $d \le 3$ this step turns out not to be necessary. Let us call the resulting complex $X$.
\item We now order the vertices $v_1,\cdots, v_n$ of $X$ by an order coming from the geometry of the complex. Then, we define the complexes $\varSigma_i:=\sd\sm X\td\{v_0, v_1, \, \cdots, v_{i-1}\}$ and prove the shellability of $\sd\sm X$ by proving \[\varSigma_i\searrow^D_\S \varSigma_{i}\tu v_i \searrow^D_\S \varSigma_{i}\td v_i=\varSigma_{i+1}\]
for every $i$, compare Figure \ref{fig:shelling}.
\end{compactitem}

\begin{figure}[htbf]
  \centering 
  \includegraphics[width=0.9\linewidth]{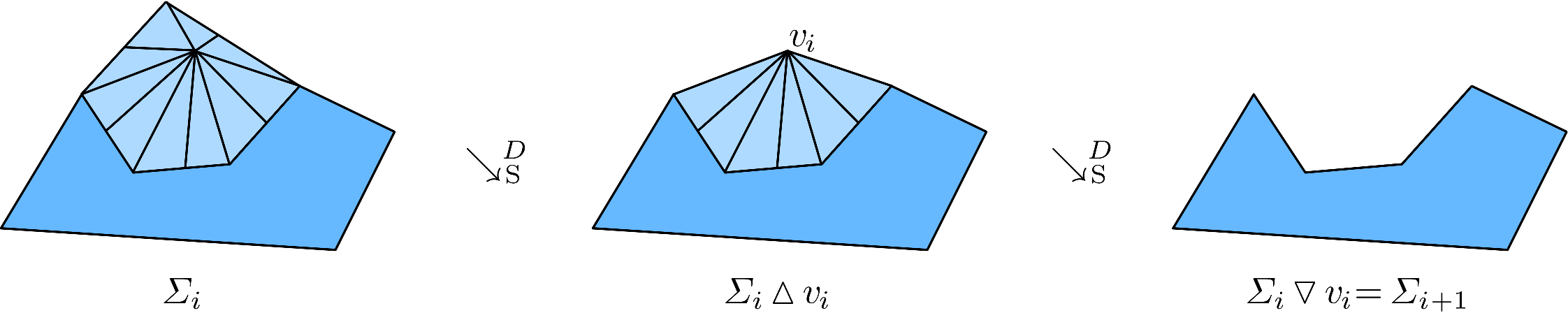} 
\caption{Illustration for the shelling procedure.}
  \label{fig:shelling}
\end{figure}

\paragraph*{Arbitrary dimension.}

A hemisphere is in \Defn{general position} w.r.t.\ a complex $C$ in $S^d$ if its boundary contains no vertex of $C$. Recall that a \Defn{polyhedron} in $S^d$ is a subset of $S^d$ that is obtained as the intersection of a finite number of closed halfspaces in the sphere.

\begin{theorem}\label{thm:shs} 
Let $C$ be any subdivision of a polyhedron $P$ in $S^d$. Let $H_+$ denote a general position hemisphere in $S^d$ that intersects $C$ non-trivially. Let ${H_-}$ denote the hemisphere complementary to $H_+$. Define $D=\varnothing$ or $D=\sd^2\parti  C$. Then
\begin{compactenum}[\rm (I)]
\item $\sd\sm N(\RS(C,H_+),C)$ is shellable relative to $D$, and
\item $\sd^2 C\searrow_\S^{D}\sd\sm N(\RS(C,H_+),C)\searrow_\S^{D}\varnothing$. 
\end{compactenum} 
\end{theorem}

\begin{cor}\label{cor:s}
Let $C$ be any subdivision of a convex polytope in $\R^d$. Then $\sd^2 C$ is shellable.
\end{cor}

Before we begin with the proof, let us state a variant of Theorem~\ref{thm:shs} that will become useful later. The proof is analogous to the proof of Theorem~\ref{thm:shs}, and left out.

\begin{theorem}\label{thm:shs2} 
Let $C$ be any subdivision of a polyhedron $P$ in $S^d$. Let $p$ denote any facet of $P$ and let $H_+$ denote a general position hemisphere in $S^d$ that intersects $P$ and $p$ non-trivially. Let ${H_-}$ denote the hemisphere complementary to $H_+$. Define $D:=\RS(\sd^2 C, p)$ or $D:=\RS(\sd^2 C, \parti P{\sm\setminus\sm} \rint p)$. Then 
\begin{compactenum}[\rm (I)]
\item $\sd\sm N(\RS(C,H_+),C)$ is shellable relative to $D$, and 
\item $\sd^2 C\searrow_\S^{D}\sd\sm N(\RS(C,H_+),C)\searrow_\S^{D}\varnothing$. 
\end{compactenum} 
\end{theorem}

\begin{proof}[\textbf{Proof of Theorem~\ref{thm:shs}}]

The proof is by induction on the dimension; the case $d=1$ is obvious. We will see that for a fixed dimension $d$, {(I)} implies {(II)} straightforwardly. Hence, we first prove {(I)} for dimension $d+1$, and we may assume that {(I)} and {(II)} have been already proven for all dimensions lower than $d$. 

\paragraph*{Proof of (I).} Let $x$ denote the midpoint of ${H_+}$, and let $\mathrm{d}(y)$ denote the distance of a point $y \in S^d$ to $x$ with respect to the canonical metric on $S^d$. Let $\mathrm{M}(C, {H_+})$ denote the set of faces $\sigma$ of $\RS(C,l)$ for which the function ${\arg\min}_{y\in \sigma} \mathrm{d}(y)$ attains its minimum in the relative interior of $\sigma$. With this, we order the elements of $\mathrm{M}(C, {H_+})$ strictly by defining $\sigma\prec \sigma'$ whenever $\min_{y\in\sigma}\mathrm{d}(y)<\min_{y\in\sigma'}\mathrm{d}(y)$. Since ${H_+}$ is in general position, we may perturb it slightly without changing $N(\RS(C,H_+),C)$, and therefore we may assume that $\prec$ is a strict total order.

This allows us to induce an associated derived order on the vertices of $\sd\sm C$, which we restrict to the vertices of $N(\RS(C,L),C)$. Let $v_0, v_1, v_2, \, \cdots, v_n$, $n=f_0\left(N(\RS(C,L),C)\right)$, denote the vertices of $N(\RS(C,L),C)$ labeled according to the latter order, starting with the maximal element $v_0$. Let $\varSigma_i$ denote the complex $\sd\sm N(\RS(C,L),C)\td\{v_0, v_1, \, \cdots, v_{i-1}\}$. We will prove that $\varSigma_i\searrow^D_\S \varSigma_{i+1}$ for all $i$, $0\le i\le n-1$. 
To finish the proof, we then only have to observe that $\varSigma_n$ is a join of $v_n$ and $\Lk(v_0,\sd^2 C)$, which is shellable relative to $\Lk(v,D)$ by inductive assumption. Hence, by Lemma~\ref{lem:jshell}, $\varSigma_n$ is shellable relative to $D$. To prove  $\varSigma_i\searrow^D_\S \varSigma_{i+1}=\varSigma_i\td v_i$, there are two cases to consider.

\begin{compactenum}[\em (1)]
\item $v_i$ corresponds to an element of $\mathrm{M}(C,L)$.
\item $v_i$ corresponds to a face of $C$ not in $\mathrm{M}(C,L)$.
\end{compactenum}	
\noindent We need some notation to prove these two cases. Recall that we can define $\RN$, $\RN^1$ and $\Lk$ with respect to a base-point; we shall need this notation in case \emph{(1)}. We shall abbreviate $v:=v_i$ for the duration of the proof. Let us denote by $\tau$ the face of $C$ corresponding to $v$ in $\sd\sm C$, and let $m$ denote the point ${\arg\min}_{y\in \tau} \mathrm{d}(y)$. Finally, define the ball $B_m$ as the set of points $y$ in $S^d$ with~$\mathrm{d}(y)\le\mathrm{d}(m)$, and define $D':=\Lk(v,D)$.

\medskip
\noindent \emph{Case (1)}: 
 In this case, \[\Lk(v,\varSigma_i)\cong \sd\sm (\sd\sm \parti  \tau \ast \sd\sm \Lk(\tau,C))\] and $\Lk(v,\varSigma_i\tu v)\subset\Lk(v,\varSigma_i)$ is combinatorially equivalent to \[\sd\sm \big(\sd\sm \parti  \tau \ast N (\mathrm{LLk}_m(\tau,C), \Lk(\tau,C))\big),\ \text{where}\ \mathrm{LLk}_m(\tau,\sigma):=\RS(\Lk(\tau,C),\RN^1_{(m,\tau)} B_m).\] 
The complexes $\sd\sm (\sd\sm \parti  \tau \ast \sd\sm \Lk(\tau,C))$ and $\sd\sm \big(\sd\sm \parti  \tau \ast N (\mathrm{LLk}_m(\tau,C), \Lk(\tau,C))\big)$ are obtained as stellar subdivisions of the complex $\sd^2 \parti  \tau \ast \sd^2 \Lk(\tau,C)$ and the complex \[\sd^2 \parti  \tau \ast \sd\sm N (\mathrm{LLk}_m(\tau,C), \Lk(\tau,C))\subset \sd^2 \parti  \tau \ast \sd^2 \Lk(\tau,C),\] respectively, compare also the proof of Proposition~\ref{prp:js}.
Hence, by inductive assumption {(II)} and Lemma~\ref{lem:bary}, $\Lk(v,\varSigma_i)\searrow^{D'}_\S\Lk(v,\varSigma_i\tu v)\searrow^{D'}_\S \varnothing$. Hence, Corollary~\ref{cor:linkshell}  proves \[\varSigma_i\searrow^D_\S \varSigma_{i}\tu v \searrow^D_\S \varSigma_{i}\td v=\varSigma_{i+1}.\]

\smallskip
\noindent \emph{Case (2)}: If $\tau$ is not an element of $\mathrm{M}(C,H_+)$, let $\sigma$ denote the face of $\tau$ containing $m$ in its relative interior. Then \[\Lk(v, \varSigma_i)\cong \sd\sm (\sd\sm \parti  \tau \ast \sd\sm \Lk(\tau,C))\] and \[\Lk(v,\varSigma_i\tu v)\cong N\left(v_\sigma\ast \sd\sm \Lk(\tau,C),\sd\sm \parti  \tau \ast \sd\sm \Lk(\tau,C)\right),\] where $v_\sigma$ is the vertex of $\sd\sm \parti  \tau$ corresponding to $\sigma$.
By the inductive assumption (II),  $\sd^2  \Lk(\tau,C)=\sd\sm (\sd\sm \Lk(\tau,C))$ is shellable relative to $\sd^2  \Lk(\tau,D)$. Furthermore, $\sd^2 \parti  \tau$ can be realized as boundary of a polytope since $\tau$ is a polytope, and therefore $\sd^2 \parti  \tau$ shells to the shellable complex $\St(v_\sigma, \sd^2 \tau)=N(v_\sigma, \sd\sm \parti  \tau)$ by the Bruggesser-Mani Theorem \cite{BruggesserMani}. Hence, Proposition~\ref{prp:js} shows that
that $\Lk(v, \varSigma_i)$ is shellable relative to $D'=\sd\sm (\sd\parti  \tau \ast \sd\sm \Lk(\tau,C))$, and this shelling can be chosen to restrict to ${\Lk(v,\varSigma_i\tu v)}$.
Corollary~\ref{cor:linkshell} now proves \[\varSigma_i\searrow^D_\S \varSigma_{i}\tu v\searrow^D_\S\varSigma_{i}\td v = \varSigma_{i+1}.\]
This finishes the proof of {(I)}.

\paragraph*{Proof of (II).} It suffices to prove the case where $D=\varnothing$, the case $D=\parti  \sd^2 C$ is then obtained by reversing the shelling. Observe that  \[A\cup B=\sd^2 C\ \text{and}\ A\cap B=\parti  A\td \parti  \sd^2 C=\parti  B \td\parti  \sd^2 C,\] where $A:=\sd\sm N(\RS(C,H_+),C)$ and $B:=\sd\sm N(\RS(C,{H_-}),C)$.  By assumption on {(I)} the complex $B$ is shellable relative to $\parti  \sd^2 C$. Then $B$ is shellable relative to $\parti  B=A\cap B$ by Proposition~\ref{prp:rev}. Hence by Lemma~\ref{lem:connsum}  we see that $\sd^2 C\searrow_{\S} A$, and $A$ is shellable by {(I)}. 
\end{proof}

\paragraph*{Dimension $d\le 3$.}
To finish the proof of Theorem~\ref{mainthm:ShellableSub}, it remains to show that for $d\le 3$, one derived subdivision is enough.

\begin{theorem}\label{thm:sh3} 
Let $C$ be any subdivision of a polytope in $\R^d$, $d\le 3$. Then $\sd\sm C$ is shellable.
\end{theorem}

The proof relies on the following analogue of Theorem~\ref{thm:shs} for $d\le 2$:

\begin{prp}\label{prp:shs} 
Let $C$ be any subdivision of a polyhedron in $S^d$, $d\le 2$. Let $H_+$ denote a general position hemisphere in $S^d$ that intersects $C$ non-trivially. Let ${H_-}$ denote the hemisphere complementary to $H_+$. Define $D=\varnothing$ or $D=\sd\sm \parti  C$. Then
\begin{compactenum}[(I)]
\item $ N(\RS(C,H_+),C)$ is shellable relative to $D$, and
\item $\sd\sm C\searrow_\S^D N(\RS(C,{H_+}),C)\searrow_\S^D\varnothing$.
\end{compactenum} 
\end{prp}

\begin{proof}
We already saw that (I) straightforwardly implies (II) in the proof of Theorem~\ref{thm:shs}. Hence, it remains to show that 
$X:=N(\RS(C,H_+),C)\searrow^D_\S \varnothing$. The case $D=\varnothing$ (and therefore also the case $D=\parti X$) is folklore. 
To prove the case $\varnothing\subsetneq D\subsetneq \parti X$, pick any facet of the simplicial complex $X$ to find a facet $\varDelta$ for which $\varDelta\cap (D - \parti  X)$ is $1$-dimensional. 

\begin{figure}[htbf]
  \centering 
  \includegraphics[width=0.64\linewidth]{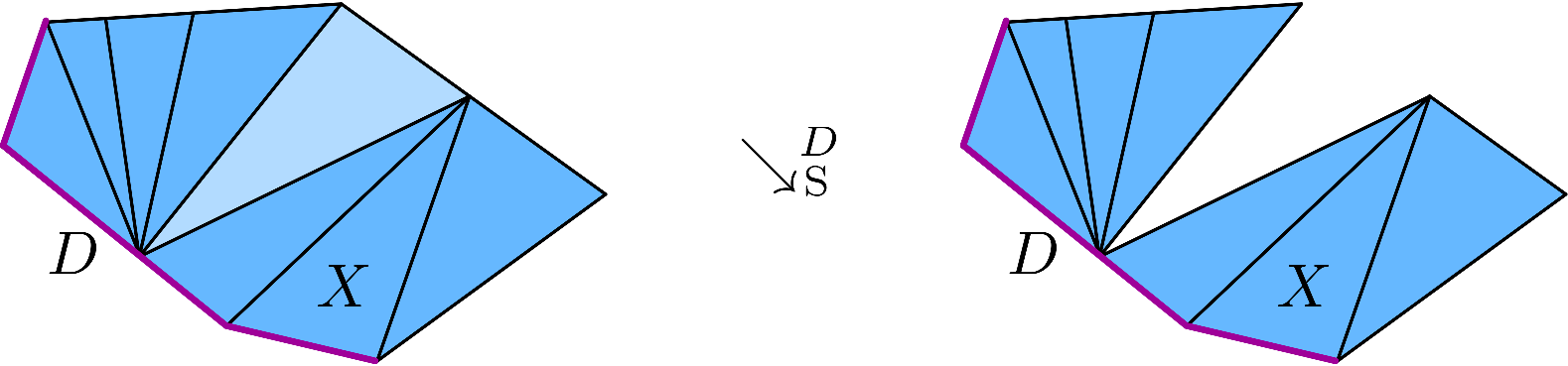} 
\caption{If $X$ is any planar pure simplicial complex, $D$ is a principal subcomplex of $\partial X$ and $\varDelta$ is any facet intersecting $\partial X \td D$ in a $1$-dimensional set, then $X\searrow_S^D X\td \varDelta$.}
  \label{fig:plshell}
\end{figure}

Then $X\rightarrow X\td \varDelta$ is a shelling step relative to $D$, cf\ Figure \ref{fig:plshell}. Proceeding this way, all facets of $X$ can be removed with shelling steps relative to $D$.
\end{proof}

\begin{proof}[\textbf{Proof of Theorem~\ref{thm:sh3}}]
Let $\mu$ be a generic vector in $\R^d$, and order the vertices of $C\subset \R^d$ according to their value under $\langle\mu,\cdot\rangle$. Since $\mu$ is generic, this gives a total order on the vertices of $C$. Let $v_0, v_1, v_2, \, \cdots, v_n$, $n=f_0(C)$, denote the vertices of $C$ labeled according to that order, starting with the vertex $v_0$ maximizing the interior product $\langle\mu,\cdot\rangle$. 
Let $\varSigma_i$ denote the complex $C\td \{v_0, v_1, \, \cdots, v_{i-1}\}$. We will prove that $\varSigma_i\searrow_\S \varSigma_{i+1}$ for all $i$, $0\le i\le n-1$. 

Set $v:=v_i$ and define $B_v$ as the set of points $y$ in $S^d$ with~$\mathrm{d}(y)\le\mathrm{d}(v)$. Then $\Lk(v,\varSigma_i)\cong  \sd\sm \Lk(\tau,C)$, and \[ \Lk(v,\varSigma_i\tu v)\cong N (\mathrm{LLk}_v(\tau,C), \Lk(\tau,C)),\ \text{where}\ \mathrm{LLk}_v(\tau,\sigma):=\RS(\Lk(\tau,C),\TT^1_{v} B_v).\] 
Hence, by Proposition~\ref{prp:shs}, \[\Lk(v,\varSigma_i)\searrow_\S\Lk(v,\varSigma_i\tu v)\searrow_\S \varnothing.\] Thus, Corollary~\ref{cor:linkshell}  proves \[\varSigma_i\searrow_\S \varSigma_{i}\tu v \searrow_\S \varSigma_{i}\td v=\varSigma_{i+1}.\]
To finish the proof, observe that $\varSigma_n$ is a join of $v_n$ and $\Lk(v_0,\sd^2 C)$, which is shellable relative to $\Lk(v,D)$ by Proposition~\ref{prp:shs}. Hence, by Lemma~\ref{lem:jshell}, $\varSigma_n$ is shellable. 
\end{proof}

\subsection{Consequences in combinatorial topology}

We derive Theorem~\ref{mainthm:equivalence} and some further consequences in PL topology.

\paragraph*{Some basic facts in simplicial approximation.}

Let us start by recalling a fundamental approximation Lemma of PL topology:

\begin{lemma}[cf.\ {\cite[Ch.\ 1, Lem. 4 ]{ZeemanBK}}]\label{lem:common}
Let $C$ be a polytopal complex, and let $C'$ be any subdivision of $C$. Then there is an $n\ge 0$ such that some $n$-th derived subdivision $\sd^n C$ of $C$ is a subdivision of both $C$ and~$C'$.
\end{lemma}

\begin{cor}\label{cor:PLhcommon}
Let $C$ and $D$ be two PL homeomorphic polytopal complexes. Then there is an $n\ge 0$ and a subdivision $D'$ of $D$ such that $\sd^n C\cong D'$.
 \end{cor}

The second result we shall need is a little more delicate; according to Zeeman, it goes back at least to Alexander and Newman. 

\begin{theorem}[{\cite[Ch.\ 3, Thm.\ 2]{ZeemanBK}}]
Let $C_1$ and $C_2$ denote two PL $d$-balls, and let $D_1$, $D_2$ denote PL $(d-1)$-balls in their respective boundaries. Then every PL homeomorphism $\phi: D_1\mapsto D_2$ extends to a PL homeomorphism $\phi: C_1\mapsto C_2$.
\end{theorem}

If we combine this with Lemma~\ref{lem:common}, we obtain:

\begin{lemma}\label{lem:bb}
Let $C$ be any PL $d$-ball, and let $D$ denote any PL $(d-1)$-ball in $\parti C$. Let $\varDelta$ be the $d$-simplex, and let $\delta$ be any one of its facets. Then there is an $n\ge 0$ and a subdivision $\varDelta'$ of $\varDelta$ such that $\sd^n D=\RS(\sd^n C,D)\cong \delta':=\RS(\varDelta',\delta)$, and the combinatorial isomorphism extends to an isomorphism $\sd^n C\mapsto \Delta'$.
\end{lemma}

Finally, we state a result that allows us to replace PL homeomorphisms with subdivisions when dealing with shellability.

\begin{prp}\label{prp:nohm}
Let $C$ and $D\subset C$ denote polytopal complexes PL homeomorphic to complexes $\varGamma=\phi(C)$ and $\varDelta=\phi(D)=\RS(\varGamma,\phi(D))$. Assume that $\varGamma$ shells to $\varDelta$. Then there exist subdivisions $C'$ of $C$ and $D'=\RS(C',D)$ of $C$ and $D$ such that $C'$ shells to $D'$.
\end{prp}

\begin{proof}
By Lemma~\ref{lem:common}, there is an $n\ge 0$ and a subdivision $C'$ of $C$ such that $\sd^n \varGamma\cong C'$, and such that $\sd^n \varDelta$ gets taken to $D':=\RS(C',D)$ under this combinatorial isomorphism. Since barycentric subdivisions preserve shellability (Lemma~\ref{lem:bary}), we see that $\sd^n \varGamma\searrow_\S \sd^n \varDelta$ and therefore $C'\searrow_\S D'$.  
\end{proof}

\paragraph*{Characterization of shellability.}

As a direct consequence of Lemma~\ref{lem:bb} and Theorems~\ref{thm:shs} and~\ref{thm:shs2}, we note the following:

\begin{theorem}\label{thm:ttb}
Let $C$ be any PL ball, and let $D=\parti C$, $D=\varnothing$ or $D\subset\parti C$ a PL $(d-1)$-ball. Then there is an $n\ge 0$ so that $\sd^n C\searrow_\S^{\sd^n D}\varnothing$.  
\end{theorem}

We conclude that shellability is almost preserved under subdivisions.

\begin{theorem}\label{thm:HCS}
Let $C$ and $D\subset C$ be polytopal complexes such that $C$ shells to $D$. Let $C'$ be any subdivision of~$C$. Then there is an $n\ge 0$ so that $\sd^{n} C'$ collapses to $\sd^{n} D'$ where $D'=\RS(C',D)$.
\end{theorem}

\begin{rem}
 By using a variant of Theorem \ref{thm:shs2} one can show that in the notation of Theorem \ref{thm:HCS}, it suffices to take $n=2$.
\end{rem}

\begin{proof}
We may assume that $C\searrow_\S D$ is only a single shelling step that consists of the removal of a single facet $\varDelta$. Set $\varDelta':=\RS(C', \varDelta)$. Now, $\varDelta \cap D$ is either a PL $(d-1)$-ball or coincides with $\parti \varDelta$. Hence, the same is true for $\varDelta'\cap D'$ which is a subdivision of $\varDelta \cap D$. By Theorem~\ref{thm:ttb}, there is an $n\ge 0$ such that $\sd^n \varDelta'$ is shellable relative to $\sd^n \varDelta'\cap \sd^n D'=\sd^n (\varDelta'\cap D')$. Hence, by Lemma~\ref{lem:connsum}, \[\sd^n C'=\sd^n \varDelta'\cup \sd^n D'=\sd^n (\varDelta'\cup D')\searrow_\S \sd^n D'. \qedhere\]
\end{proof}

Finally, we can finish the proof of Theorem~\ref{mainthm:equivalence}; by Proposition~\ref{prp:nohm}, it is equivalent to the following theorem, which we will prove in its stead.

\begin{theorem}\label{thm:shellyyo}
Let $C$  and $D\subset C$ be polytopal complexes such that $C \searrow_\S D$. Suppose that some subdivision $C'$ of $C$ shells onto some subdivision $D'$ of $D$. Then for $n$ large enough,
$\sd^{n} C  \searrow_\S  \sd^{n} D.$
\end{theorem}

\begin{proof}
By Lemma~\ref{lem:common}, there is an $\ell\ge 0$ so that $\sd^\ell C$ is a subdivision of $C'$, and such that $\sd^\ell D$ is a subdivision of $D'=\RS(C',D)$. Moreover, by Theorem~\ref{thm:HCS}, there is an $m\ge 0$ so that \[\sd^{m+\ell} C =\sd^m  \sd^\ell C \searrow_\S \sd^m  \sd^\ell D= \sd^{m+\ell} D.\qedhere\]
\end{proof}

Now, a \Defn{triangulated manifold} is a simplicial complex whose underlying space is a topological manifold.

\begin{cor} \label{cor:characterizationPL}
For a triangulated manifold $M$, the following are equivalent:
\begin{compactenum}[\rm (1) ]
\item $M$ is a PL ball or a PL sphere.
\item For some $m\ge 0$, the $m$-th derived subdivision of $M$ is shellable.
\end{compactenum}
\end{cor}

\begin{proof}
 \begin{compactitem}[$\circ$]
\item \makebox[4.4em][l]{(1) $\Rightarrow$ (2)} is well known: One can prove by induction on the dimension, combined with elementary cellular homology and Poincar\'e duality, that all shellable $d$-pseudomanifolds, for $d > 0$, are either PL balls or PL spheres, cf.\ \cite{DK}.  
\item \makebox[4.4em][l]{(2) $\Rightarrow$ (1)} follows from Theorem~\ref{thm:shellyyo}, and the fact that every PL $d$-ball has some subdivision which is also some iterated derived subdivision of the $d$-simplex (hence shellable).\qedhere
\end{compactitem}
\end{proof}

Corollary \ref{cor:characterizationPL} answers a problem of Billera and Swartz. Since shellability is closely connected to convex polytopes, the result is closely related to an important problem concerning realizations of convex polytopes.

\begin{problem}[Billera]\label{pr:bill}
Let $C$ be any PL sphere. Is there an $n\ge 0$ such that $\sd^n C$ is combinatorially equivalent to the boundary of a polytope? 
\end{problem}

We conclude by expanding Corollary~\ref{cor:characterizationPL} to a characterization of PL manifolds:

\begin{cor} \label{cor:characterizationPL2}
For any triangulated manifold $M$, the following are equivalent:
\begin{compactenum}[\rm (1) ]
\item $M$ is PL.
\item Every vertex link in $M$ becomes shellable after suitably many derived subdivisions.
\item For some $m\ge 0$, the $m$-th derived subdivision of any vertex link in $M$ is shellable.
\item For some $m \ge 0$, all vertex links in $\sd^m M$ are shellable.
\end{compactenum}
\end{cor}

\setcounter{theorem}{0}
\section{Collapsibility, products, and the Zeeman conjecture}
The work of the previous chapter shows that derived subdivisions induce shellability. An analogue of Theorem~\ref{thm:shellyyo}  can be proven for collapsibility, either by invoking Theorem \ref{thm:HCS}, or by using the more powerful result proven in the first author's PhD thesis \cite{AD2013}:

\begin{theorem}[{\cite[Thm.\ 2.3.B]{AD2013}}]\label{thm:HC}
Let $C, D$ be polytopal complexes such that $C$ collapses to $D$. Let $C'$ be any subdivision of $C$. Then $\sd\sm C'$ collapses to $\sd\sm D'$, where $D'=\RS(C',D)$.
\end{theorem}

This proves Theorem~\ref{mainthm:equivalencec}:
 
\begin{cor}\label{thm:zeemanyes}
Let $C, D$ be polytopal complexes such that $D \subset C$. Suppose that some subdivision $C'$ of $C$ collapses onto some subdivision $D'=\RS(C',D)$ of $D$. Then for $n$ large enough,
$\sd^{n\,} C \, \searrow \, \sd^{n\,} D.$
\end{cor}

\begin{proof}
With Lemma~\ref{lem:common}, let us choose $n$ large enough, so that $\sd^n C$ is a subdivision of $C'$. Then $\sd^{n+1} C \searrow \sd^{n+1} D$ by Theorem~\ref{thm:HC}.
\end{proof}

So, derived subdivisions induce collapsibility as well. We will now see that instead of subdividing, one could take repeated product with intervals: The effect is similar, and this is not a coincidence.

\setcounter{theorem}{3}
\subsection{Collapsibility of products}
In this section, we show that any contractible complex can be made collapsible by taking products with the $n$-dimensional cube, for $n$ suitably large.

In order to study products with cubes, we start by introducing a special subdivision, which is in some sense a cubical analogue of the classical operation of stellar subdivision.

\begin{definition}\label{def:tubu}
Let $C$ be a polytopal complex in $\R^d$, and let $\tau$ be any face of $C$. Let $v_\tau$ denote a point anywhere in the relative interior of $\tau$, and let $\lambda$ be any number in the interval $(0,1)$. Define
\[
\cst(\tau,C):=(C-\tau) \cup \{\conv \sigma\cup (\lambda\sigma+(1-\lambda)v_\tau) : \sigma \in \St(\tau,C)-\tau \}.
\]
$\cst(\tau,C)$ is the \Defn{cubical stellar subdivision}, or \Defn{c-stellar subdivision} of $C$ at $\tau$.
\end{definition}

\begin{lemma}\label{lem:tubeCollapse}
Let $C$ be an arbitrary complex, and let $\tau$ be any face of $C$. Then $C\times [0,1]$ collapses to (a complex combinatorially equivalent to) $\cst(\tau,C)$.  
\end{lemma}

\begin{figure}[htbf]
  \centering 
  \includegraphics[width=0.99\linewidth]{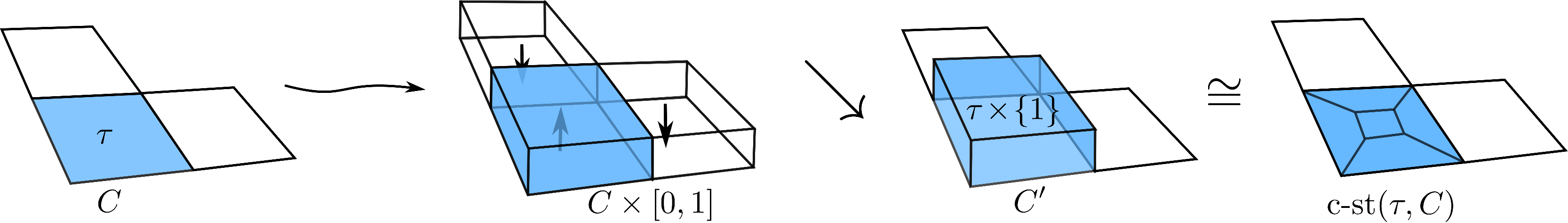} 
\caption{\small $C\times [0,1]$ collapses vertically to $C'$, which is combinatorially equivalent to a subdivision $\cst(\tau,C)$ of $C$.}
  \label{fig:dierkertrick}
\end{figure}

\begin{proof} Let $\sigma$ be a face of $C$. We perform collapses on $C \times [0,1]$ as follows:
\begin{compactitem}[$\circ$]
\item If $\sigma$ contains $\tau$, we delete $\sigma \times \{0\}$ from the complex.
\item If $\sigma$ is not in $\St(\tau,C)$, we delete $\sigma \times \{1\}$ from the complex
\end{compactitem}
The deletions are conducted in order of decreasing dimension, in order to ensure that faces are free when deleted. The result is combinatorially equivalent to $\cst(\tau,C)$, compare Figure~\ref{fig:dierkertrick}.
\end{proof}

\begin{definition}[Cubical derived subdivision]
A \Defn{cubical derived subdivision}, or \Defn{c-derived subdivision}, is any subdivision of a $d$-complex $C$ obtained by c-stellar subdivision, subdividing first all $d$-faces of $C$, then all $(d-1)$-faces of resulting complex, then all $(d-2)$-faces, and so on up to the faces of dimension~$1$.
\end{definition}

\noindent c-stellar and c-derived subdivisions mimic the classical notions of stellar and derived subdivisions, respectively. (Recall that a derived subdivision of a $d$-complex can be obtained by iterated stellar subdivision, beginning with the $d$-faces of $C$, then all $(d-1)$-faces of $C$, and so on.) In fact, c-derived subdivisions enjoy several analogous properties, among them the following theorem: 

\begin{theorem} \label{thm:tub} Let $C, D$ be polytopal complexes such that $D \subset C$. Suppose that some subdivision $C'$ of $C$ collapses onto some subdivision $D'=\RS(C',D)$ of $D$. Then for $n$ large enough,
$\csd^{n} C \, \searrow \, \csd^{n} D.$ \qed
\end{theorem}

The proof is analogous to the proof of Theorem \ref{thm:zeemanyes}. We are now ready to show that contractible complexes can be made collapsible by taking products with high-dimensional cubes. 
Recall that two complexes are called \Defn{simple homotopy equivalent} if there exists a sequence of PL expansions and PL collapses that transforms one complex into the other, cf.~\cite{CohenM}. 

\begin{theorem}\label{thm:simphomo}
If two polytopal complexes $C$, $D$ are simple homotopy equivalent, there exists an $n\ge 0$ such that $C\times \mathrm{I}^n$ collapses to a polytopal complex combinatorially equivalent to a subdivision of $D$.
\end{theorem}

\begin{proof}
Let $C$ and $D$ be two complexes that are simple homotopy equivalent. By a classical result of Dierker, Lickorish and Cohen \cite{Dierker, Lickorish, Cohen}, there is an $\ell\ge 0$ such that the complex $E:=C\times \mathrm{I}^{\ell}$ is PL equivalent to a complex that collapses to a complex $F$ PL homeomorphic to $D$.

By Theorem~\ref{thm:tub} there exists a c-derived subdivision $\csd^{m} E$ of $E$ that collapses to $\csd^{n} F=\RS(\csd^{m} E,F)$. Lemma~\ref{lem:tubeCollapse} now provides an $n\ge 0$ so that $ E\times \mathrm{I}^n$ collapses to a complex combinatorially equivalent to $\csd^{n} F$. Thus, $C\times \mathrm{I}^{m+\ell}= E \times \mathrm{I}^{m}$ collapses to a complex PL homeomorphic to $D$.
\end{proof}

\begin{cor}\label{cor:oliver}
For any contractible complex $C$, there is an $n\ge 0$ for which $C\times \mathrm{I}^n$ is collapsible.
\end{cor}

\begin{proof}
Every contractible complex is simple homotopy equivalent to a point, cf.\ \cite{Whitehead}.
\end{proof}

\begin{rem}\label{rem:shelloliver}
The analogous theorem does not hold with respect to shellability. Indeed, not even the results of Dierker, Lickorish and Cohen extend to shellability. For instance, let $C:=v\ast \mathrm{D}$ be the join of the Dunce's hat $\mathrm{D}$ with a vertex, and let $n$ be any nonnegative integer. Let $\varSigma=\varphi(C\times \mathrm{I}^n)$ be any complex PL homeomorphic to $C\times \mathrm{I}^n$, and let $\sigma$ be any facet of $\RS\left(\varSigma,\varphi(v\times \mathrm{I}^n)\right)$. Then $\Lk(\sigma,\varSigma)$ is PL homeomorphic to the Dunce's hat $\mathrm{D}$, and therefore not shellable. Hence $\varSigma =C\times \mathrm{I}^n$ is \emph{not} PL homeomorphic to a shellable complex.
\end{rem}

\paragraph*{Non-evasiveness} A stronger notion than collapsibility, called non-evasiveness, was introduced in \cite{KahnSaksSturtevant} in connection with the Aanderaa--Karp--Rosenberg conjecture. A simplicial $d$-complex $C$ is called \Defn{non-evasive} if either $d=0$ and $C$ is a single point, or $d >0$ and there is a vertex $v$ of $C$ so that both $\Lk (v,C)$ and $C-v$ are non-evasive. Every non-evasive complex is collapsible, and therefore contractible.
 To make sense of non-evasiveness with respect to products, recall that an \Defn{order complex} of a poset $\mathcal{R}$ is any simplicial complex $\varOmega(\mathcal{R})$ whose face poset is combinatorially equivalent to the poset of chains in $\mathcal{R}$, ordered by inclusion. A poset is \Defn{non-evasive} if its order complex is non-evasive, and the product of posets is defined as follows: If $\mathcal{A}$ and $\mathcal{B}$ are two posets on sets $A, B$ and respectively, then $\mathcal{A}\times \mathcal{B}$ is a poset on the set $A\times B$, where $(a,b)\prec(a',b')$ if and only if $a\prec a'$ and  $b\prec b'$.

\begin{prp}[{Welker~\cite{Welker}}]\label{prp:welker}
The product of any two non-evasive posets is non-evasive.
\end{prp}

Welker~\cite[Open Problem 2]{Welker} asked whether the converse of Proposition \ref{prp:welker} holds; more precisely, he asked whether the non-evasiveness of the product $\mathcal{A} \times \mathcal{B}$ of two order complexes $\mathcal{A}, \mathcal{B}$ implies that both $\mathcal{A}$ and $\mathcal{B}$ are non-evasive.

To see the connection to products with intervals, let us denote by $\mathcal{I}$ the poset on two elements $0,\ 1$, ordered by $0<1$. Clearly, $\varOmega(\mathcal{I})$ is combinatorially equivalent to $\mathrm{I}$. Moreover, products of posets with $\mathcal{I}$ behave like products of complexes with $\mathrm{I}$: Indeed $\varOmega(\mathcal{P}\times \mathcal{I})$ is a subdivision of $\varOmega(\mathcal{P})\times \varOmega(\mathcal{I})$ that introduces no new vertices. This connection allows us to answer Welker's problem.

\begin{prp} \label{prop:orderComplex}
There exists an evasive poset $\mathcal{P}$ such that $\mathcal{P} \times \mathcal{I}$ is non-evasive. 
\end{prp}

\begin{proof}

Let $\mathcal{R}$ denote the face poset of the polytopal complex given Figure \ref{fig:welker}(1). Let $\mathcal{P}$ denote the poset given by adding an element $v$ to $\mathcal{R}$, that is defined to be larger than all faces of the highlighted subcomplex in Figure \ref{fig:welker}(1). Then the order complex $C:=\varOmega(\mathcal{P})$ can be pictured as in Figure \ref{fig:welker}(2).

\begin{figure}[htbf]
  \centering 
  \includegraphics[width=0.7\linewidth]{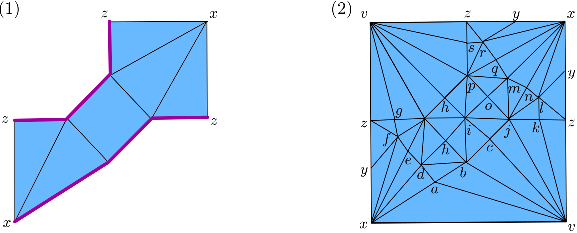} 
\caption{(1) A construction step for the poset $\mathcal{P}$. (2) The order complex $C=\varOmega(\mathcal{P})$}
  \label{fig:welker}
\end{figure}

The complex $C$ is collapsible (starting from the bottom edge), but the links of all vertices deformation retract to $1$-spheres, hence they are not contractible and cannot be non-evasive. Consequently, $C$ and in particular $\mathcal{P}$ are not non-evasive; compare~\cite[Example 5.4]{MiniaBarmak}.

Consider now the complex $\varOmega(\mathcal{P}\times \mathcal{I})$, in which every vertex $v$ of $C$ corresponds to two copies $(v,0)$ and $(v,1)$ of $\varOmega(\mathcal{P}\times \mathcal{I})$. A \Defn{non-evasive deletion} is the deletion of a vertex whose link is non-evasive. Clearly, a complex is non-evasive if it can be deformed to a point by non-evasive deletions, which we will show for $\varOmega(\mathcal{P}\times \mathcal{I})$.
\begin{compactenum}[(1)]
\item Delete all the vertices of $\varOmega(\mathcal{P}\times \mathcal{I})$ of type $(\cdot,v)$ that are not adjacent to $(d,1)$ in an order given by the poset. 
\item From the resulting complex, delete the vertices $(v,0)$ and $(x,1)$, in that order.
\item From the resulting complex, delete the vertices $(\cdot,0)$, starting with $(a,0)$ and ending up with $(s,0)$ in alphabetical order.
\item From the resulting complex, delete the vertices $(v,1)$, $(z,1)$, $(z,0)$ and $(y,0)$ in that order.\end{compactenum}
Only the vertex $(x,0)$ remains, and one can easily check the deletions described above are {non-evasive deletions}.
\end{proof}

\subsection{A simplicial approach to the Zeeman conjecture}
The \Defn{Zeeman conjecture} claims that the product of any contractible $2$-dimensional complex $C$ with the interval $[0,1]$ is PL homeomorphic to a collapsible complex. Its importance is highlighted by the fact that it implies, quite straightforwardly, the three-dimensional Poincar\'{e} conjecture, recently proven by Perelman~(cf.\ \cite[pp.\ 78--79]{Glaser}). In turn, Perelman's work straightforwardly shows the validity of the Zeeman conjecture for all ``special'' contractible  $2$-complexes that embed in $\R^3$, cf.\ \cite{Gillman, MatveevRolfsen}.

Using Theorem~\ref{mainthm:equivalence}, this (still open) conjecture can be rephrased in terms of derived subdivisions.

\begin{prp} \label{prop:CohenBary} 
The Zeeman conjecture is equivalent to the following conjecture: For any contractible $2$-complex $C$, there is an $m\ge 0$ such that $\sd^{m\,} (C \times \mathrm{I})$ is collapsible.
One can prove the following weaker statement: For any contractible $2$-complex $C$, there is an $m\ge 0$ such that $\sd^{m\,} (C \times \mathrm{I}^6)$ is collapsible. 
\end{prp}

\begin{proof}
The first claim follows from Theorem~\ref{thm:zeemanyes}. As for the second claim, Cohen showed for any contractible $2$-complex~$C$, the product $C \times \mathrm{I}^6$ is PL equivalent to a collapsible complex~\cite{Cohen}. The conclusion follows then from Theorem~\ref{thm:zeemanyes}.
\end{proof}

\noindent Our next goal is to show that the number of subdivisions needed, in order to achieve collapsibility, can be arbitrarily high.

\begin{lemma} \label{lem:3ball}
Let $B$ be any $3$-ball. Let $m, n$ be two non-negative integers. The link of every proper face in $\sd^{m} (B\times \mathrm{I}^n)$ is shellable.
\end{lemma}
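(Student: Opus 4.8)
The plan is to induct on $m$, using two standard facts together with Lemma~\ref{lem:JoinsShellable}: the boundary complex of a polytope is shellable~\cite{BruggesserMani}, and barycentric subdivision preserves shellability. The engine of the induction is the join decomposition of links in a derived subdivision. Recall that a nonempty face of $\sd Q$ is a chain $F_0 \subsetneq F_1 \subsetneq \cdots \subsetneq F_k$ in the face poset of $Q$, and that its link in $\sd Q$ is the join
\[
\sd \partial F_0 \;\ast\; \sd \partial(F_1/F_0) \;\ast\; \cdots \;\ast\; \sd \partial(F_k/F_{k-1}) \;\ast\; \sd \Lk(F_k,Q),
\]
where $F_i/F_{i-1}$ denotes the quotient polytope and the last factor is the irrelevant complex $\{\emptyset\}$ exactly when $F_k$ is a facet of $Q$. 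Each factor $\sd \partial(F_i/F_{i-1})$ and $\sd \partial F_0$ is the barycentric subdivision of the boundary complex of a polytope, hence shellable. So, writing $\sd^{m}(B\times I^n)=\sd\bigl(\sd^{m-1}(B\times I^n)\bigr)$ and applying Lemma~\ref{lem:JoinsShellable}, the link of a nonempty proper face of $\sd^m(B\times I^n)$ is shellable as soon as $\sd \Lk\bigl(F_k,\sd^{m-1}(B\times I^n)\bigr)$ is, i.e.\ as soon as $\Lk\bigl(F_k,\sd^{m-1}(B\times I^n)\bigr)$ is shellable. When $F_k$ is a facet this factor is trivial; otherwise it is the link of a nonempty proper face of $\sd^{m-1}(B\times I^n)$, which is shellable by the inductive hypothesis. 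Thus the induction closes, and everything reduces to the base case $m=0$.

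For $m=0$, a nonempty face of the polytopal complex $B\times I^n$ is a product $\sigma\times\tau$ with $\sigma$ a face of $B$ and $\tau$ a face of the cube $I^n$, and one checks directly from the definition of link that $\Lk(\sigma\times\tau,\,B\times I^n)=\Lk(\sigma,B)\ast\Lk(\tau,I^n)$. Since $I^n$ is a simple polytope, the face figure $\Lk(\tau,I^n)$ is a simplex (the irrelevant complex when $\tau=I^n$), hence shellable. And $\Lk(\sigma,B)$ is shellable: every triangulated $3$-ball is PL, so $\Lk(\sigma,B)$ is a triangulated sphere or ball of dimension at most $2$ (or the irrelevant complex when $\sigma=B$), and all triangulated spheres and balls of dimension at most $2$ are shellable. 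By Lemma~\ref{lem:JoinsShellable}, the join $\Lk(\sigma,B)\ast\Lk(\tau,I^n)$ is shellable, which settles $m=0$.

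The proof is essentially an assembly of known ingredients, so I do not expect a serious obstacle; the points that need care are (i) invoking the join decomposition of links once for each of the $m$ subdivision steps, and recognizing that its non-boundary factor is again a link inside $\sd^{m-1}(B\times I^n)$, so that the induction is on $m$; (ii) identifying $\Lk(\tau,I^n)$ as a simplex via simplicity of the cube and $\Lk(\sigma,B)$ as a shellable low-dimensional sphere or ball via the automatic PL structure of a $3$-ball; and (iii) bookkeeping the degenerate factors -- when some $F_i/F_{i-1}$ is a point, or some $F_k$ is a facet, or $\sigma=B$ -- so that each appeal to Lemma~\ref{lem:JoinsShellable} and to ``$\sd$ preserves shellability'' is literally applicable. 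Fixing the convention that the link of a facet is the irrelevant complex $\{\emptyset\}$ keeps the inductive statement ``every nonempty proper face of $\sd^m(B\times I^n)$ has shellable link'' uniform across all levels.
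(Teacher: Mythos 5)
Your proof is correct and follows essentially the same route as the paper's: shellability of links in $B$ (low-dimensional spheres/balls), lifted to $B\times I^n$ by joining with simplices (the paper phrases this as coning, since $I^n$ is simple), and then propagated through the barycentric subdivisions via the join decomposition of links in $\sd Q$ together with Lemma~\ref{lem:JoinsShellable}. Your explicit induction on $m$ and the spelled-out chain formula $\sd\partial F_0 \ast \cdots \ast \sd\partial(F_k/F_{k-1}) \ast \sd\Lk(F_k,Q)$ merely make precise what the paper compresses into the sentence ``$\Lk(\tau',A')$ can be obtained from $\Lk(\tau,A)$ via iterated barycentric subdivisions and joins with polytope boundaries.''
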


\begin{proof} Since $B$ is three-dimensional, each link in $B$ is a sphere or a ball of dimension $\le 2$, and therefore shellable by Proposition \ref{prp:shs}. For brevity, set $A := B\times \mathrm{I}^n$ and $A':= \sd^m A$. 

For any face $\tau$~of $A$, there is a face $\sigma$ in $B$ such then $\Lk(\tau, A)$ is obtained from $\Lk(\sigma, B)$ by joins with $(n-1)$-simplices. Since joins preserve shellability by Lemma \ref{lem:jshell}, we obtain that $\Lk(\tau, A)$ is shellable. So also every link in $A$ is shellable.

Now, for any face $\tau'$ of $A'$, there is a face $\tau$ in $A$ such that  $\Lk(\tau', A')$ can be obtained from $\Lk(\tau, A)$ via iterated derived subdivisions and joins with polytope boundaries, where the polytopes are faces of~$A$. Polytope boundaries are shellable by the Bruggesser--Mani Theorem \cite{BruggesserMani}, and joins and derived subdivisions preserve shellability by Lemmas~\ref{lem:bary} and \ref{lem:jshell}. Hence every link in $A'$ is shellable. \end{proof}

\begin{definition}[cf.~\cite{Benedetti-DMT4MWB}]
A triangulation $C$ of a $d$-manifold with non-empty (resp.\ empty) boundary is called \Defn{endo-collapsible} if $C$ minus a $d$-face collapses onto $\parti  C$ (resp.\ onto a vertex).
\end{definition}  

\begin{lemma}[{\cite[Corollary~3.21]{Benedetti-DMT4MWB}}]
\label{lem:endcollapse}
Let $B$ be a collapsible PL $d$-ball. 
If $\sd\sm \Lk\,  (\sigma,B)$ is endo-collapsible for every proper face $\sigma$, then $\sd\sm B$ is also endo-collapsible. 
\end{lemma}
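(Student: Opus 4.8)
The plan is to imitate the proof of Theorem~\ref{thm:zeemanyes}: fix a collapse of $B$ onto a point and ``lift'' it to $\sd B$, turning each elementary collapse of $B$ into a short run of elementary collapses of $\sd B$. Where that proof used convexity (via Theorem~\ref{thm:endoc}) to guarantee that the relevant local pieces are endocollapsible, here one uses the hypothesis directly: the subdivided links $\sd\Lk(\sigma,B)$ are endocollapsible. The one genuinely new feature is that the lifted collapse must terminate on $\sd\partial B$ rather than on a point --- note that $\sd B$ minus a facet is homotopy equivalent to $S^{d-1}$, not to a point --- so exactly one interior facet $\Delta$ of $\sd B$ is allowed to survive as the ``critical'' cell, and no face of $\sd\partial B$ may ever be deleted.

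Concretely: since $B$ is collapsible, fix a collapse $B = B_0 \searrow_e B_1 \searrow_e \cdots \searrow_e B_N = \{v_0\}$ with collapse pairs $(\sigma_i,\Sigma_i)$, arranging (since $B$ is a ball, by rerouting the final collapses) that $v_0 \in \partial B$. Each $\Sigma_i$ is maximal in $B_{i-1}$ and $\sigma_i$ is a free facet of it, so the faces of $\sd B$ whose relative interior lies in $\operatorname{relint}|\Sigma_i|$, respectively in $\operatorname{relint}|\sigma_i|$, all live in the subdivided closed star of $\Sigma_i$. The $i$-th batch of elementary collapses deletes from the current complex exactly the faces lying over $\operatorname{relint}|\Sigma_i|$, together with those over $\operatorname{relint}|\sigma_i|$ when $\sigma_i$ is an interior face of $B$ (and deletes nothing when $\Sigma_i\subset\partial B$). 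Two things should make each such batch legitimate. First, the subdivided closed star of $\Sigma_i$ is built from the subdivided simplex $\sd\overline{\Sigma_i}$ and from $\sd\Lk(\Sigma_i,B)$ (it is a subdivision of their join); the latter is endocollapsible by hypothesis, and the former by a separate induction on dimension --- which is exactly the case $B=\Delta^d$ of the present lemma, for which the link hypothesis then holds by the inductive assumption --- and these two inputs combine, much as in Lemma~\ref{lem:JoinsShellable}, to collapse the star onto the appropriate facet-complement subcomplex. Second, and crucially, by the time step $i>1$ is reached a ``facet's worth'' of $\sd\overline{\Sigma_i}$ has \emph{already} been cleared by earlier batches: when $\sigma_i$ is interior, the other top face of $B$ through $\sigma_i$ was deleted at an earlier step (that is precisely what made $\sigma_i$ free), so the part of $\sd\overline{\Sigma_i}$ leaning on $\sigma_i$ is gone and jump-starts the endocollapse with no new critical cell; only the very first batch, which removes the first interior top face of $B$, must pay for a critical cell, and that cell is our $\Delta$. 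Since each batch's deletions lie strictly over $\operatorname{relint}|\Sigma_i|\cup\operatorname{relint}|\sigma_i|$, they are shared with no other face of $B_{i-1}$, so the batch is a valid collapse inside the ambient complex and disturbs nothing else. After all $N$ batches the surviving complex is exactly $\sd\partial B$ (the terminal vertex $v_0\in\partial B$ contributing only the vertex $\{v_0\}\in\sd\partial B$).

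The main obstacle is making the jump-start mechanism precise and globally coherent: one must choose the collapse of $B$ carefully (so that the already-cleared region in $\sd B$ grows monotonically and the process does terminate on the boundary), keep track of which sub-faces of each $\overline{\Sigma_i}$ have already disappeared, and check that all batches --- in particular those coming from boundary faces, which must leave $\sd\partial B$ untouched --- splice into one honest collapse of $\sd B$ with $\Delta$ as the unique critical cell; this is in essence a discrete-Morse-theoretic acyclicity verification. A secondary point is the auxiliary induction showing that the barycentric subdivision of a simplex (and, along the way, of a simplex boundary) is endocollapsible; that induction is where the hypothesis on the links of $B$ actually gets spent. PL-ness enters only to ensure that every $\Lk(\sigma,B)$ is a PL ball or sphere, so that ``$\sd\Lk(\sigma,B)$ is endocollapsible'' is a meaningful hypothesis and is stable under the subdivisions and cone constructions used above.
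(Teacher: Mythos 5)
The paper does not actually prove this lemma; it is imported verbatim from~\cite{Benedetti-DMT4MWB} (Corollary~3.21 there), so there is no in-paper proof to compare against. Evaluating your argument on its own merits, it contains a genuine gap.

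The plan is to lift a collapse $B=B_0\searrow_e\cdots\searrow_e\{v_0\}$ to $\sd B$, with the $i$-th batch deleting precisely the faces of $\sd B$ over $\operatorname{relint}|\Sigma_i|$ (and over $\operatorname{relint}|\sigma_i|$ when $\sigma_i$ is interior). The problem is your ``jump-start'' claim. You assert that by step $i$ ``a facet's worth of $\sd\overline{\Sigma_i}$ has already been cleared,'' because the other face of $B$ through $\sigma_i$ was deleted earlier. But that other face lies \emph{outside} $\overline{\Sigma_i}$; its removal takes nothing out of $\sd\overline{\Sigma_i}$. In fact \emph{every} subface of $\Sigma_i$ is still present in $B_{i-1}$ (a simplicial complex is downward closed and $\Sigma_i\in B_{i-1}$), so the whole of $\sd\overline{\Sigma_i}$ is intact at step~$i$. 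Consequently, whenever $\Sigma_i$ is interior but $\sigma_i\subset\partial B$, the batch must collapse the full ball $\sd\overline{\Sigma_i}$ onto its entire boundary sphere $\sd\partial\overline{\Sigma_i}$, which is impossible without spending a critical facet. A short Euler-characteristic count on $\partial B$ shows that this situation ($\Sigma_i$ interior, $\sigma_i$ boundary) must occur, and in general occurs many times, so ``only the very first batch pays for a critical cell'' is false. By contrast, when both $\sigma_i,\Sigma_i$ are interior the batch is just $\sd$ of an elementary collapse of a simplex and goes through with no jump-start needed at all --- which also means the link hypothesis $\sd\Lk(\sigma,B)$ endocollapsible never actually enters your batch mechanism: the batches live inside $\sd\overline{\Sigma_i}$, not in $\St(\Sigma_i,B)$, so invoking $\sd\Lk(\Sigma_i,B)$ is a red herring in your set-up.

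What the hypothesis is really for, and what the cited proof exploits, is the \emph{vertex-star} structure of $\sd B$ rather than the ``which top face am I over'' stratification you use. For a barycenter $\hat F$ of a face $F$ of $B$ one has $\Lk(\hat F,\sd B)\cong \sd\partial F\ast\sd\Lk(F,B)$, and deleting interior vertex stars of $\sd B$ in a suitable order reduces the endocollapse of $\sd B$ to endocollapsing these joins, which is where the hypothesis on $\sd\Lk(\sigma,B)$ and the induction on dimension (for the $\sd\partial F$ factor) are spent. Your Case~2 observation is correct and useful, but the Case~3 bookkeeping needs a genuinely different mechanism; as written, the argument does not close.

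A secondary point: you assume without justification that the collapse of $B$ can be rerouted to terminate at a boundary vertex $v_0\in\partial B$. That is plausible but is not an obvious consequence of collapsibility alone and deserves a reference or an argument.
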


\begin{lemma} [{\cite[Theorem~3.1]{Benedetti-DMT4MWB}}]
\label{lem:shellableEndo}
All shellable manifolds are endo-collapsible. 
\end{lemma}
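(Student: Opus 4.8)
The plan is to read the collapse straight off the shelling. Fix a shelling $F_1,\ldots,F_N$ of the (pseudo)manifold $C$, and write $C_i:=F_1\cup\cdots\cup F_i$ and $G_i:=F_i\cap C_{i-1}$; by the definition of shelling each $G_i$ is pure $(d-1)$-dimensional and a beginning segment of a shelling of $\partial F_i$, hence in particular shellable. The elementary building block I would prove first is the following: a $d$-simplex $\Delta$ collapses onto any subcomplex $K$ of $\partial\Delta$ that is pure $(d-1)$-dimensional, shellable, and \emph{properly} contained in $\partial\Delta$. This is ``shellable balls are collapsible'' with a parameter, and I would prove it by induction on the dimension: complete a shelling of $K$ to a shelling $\tau_1,\ldots,\tau_{d+2}$ of $\partial\Delta$, perform the elementary collapse of $\Delta$ across the free ridge $\tau_{d+2}$ to reach the shellable $(d-1)$-ball $\partial\Delta-\tau_{d+2}$, and then collapse that ball onto its shelling-initial subcomplex $\tau_1\cup\cdots\cup\tau_\ell=K$, the last step being the same statement one dimension lower.

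\textbf{Closed case.} Here $\partial C=\emptyset$ and I must show $C$ minus a facet collapses to a point. Remove $F_N$. Since $C$ is a closed pseudomanifold and $F_N$ is the last facet, every ridge of $F_N$ already lies in $C_{N-1}$, so $C\setminus F_N=C_{N-1}$. Now collapse $F_{N-1}$ onto $G_{N-1}$, then $F_{N-2}$ onto $G_{N-2}$, and so on down to $F_2$ onto $G_2$, finally collapsing the remaining simplex $F_1$ to a point. Each step $F_i\searrow G_i$ removes only faces of $F_i$ that are not in $C_{i-1}$, so it is legal inside the current complex, and it is possible by the building block above as soon as $G_i\subsetneq\partial F_i$. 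To see the latter for $2\le i\le N-1$ I would use that $C_{i-1}$ is a shellable ball, so $\partial C_{i-1}$ is a strongly connected closed $(d-1)$-pseudomanifold; were $G_i$ equal to $\partial F_i$ it would be a $(d-1)$-dimensional closed sub-pseudomanifold of $\partial C_{i-1}$, hence all of $\partial C_{i-1}$, which would make $C_i$ closed and force $i=N$ — a contradiction.

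\textbf{Boundary case.} Now $\partial C\ne\emptyset$ and I must collapse $C$ minus a facet onto $\partial C$. Choosing the shelling so that $F_N$ carries a free boundary ridge (re-shelling if necessary), remove $F_N$; then $\partial C\subseteq C\setminus F_N$, and every face of $C\setminus F_N$ outside $C_{N-1}$ lies in $\partial C$. I would again peel the facets in reverse shelling order, collapsing $F_i$ onto $K_i:=G_i\cup(\partial F_i\cap\partial C)$; by the building block this is possible once $K_i\subsetneq\partial F_i$, i.e.\ once $F_i$ has a ridge shared with a later facet. These collapses never touch $\partial C$; and since each ridge interior to $C$ lies in exactly two facets and the higher-indexed of the two has been peeled by the time we reach the lower-indexed one, the interior gets exhausted and the terminal complex is exactly $\partial C$.

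\textbf{Where the difficulty lies.} The clean parts are the building block and the closed case; the genuine work is the bookkeeping in the boundary case — proving that when the peeling reaches $F_i$ with $i<N$ a free ridge is actually available (equivalently, that $F_i$ is adjacent in the dual graph to some $F_j$ with $j>i$), and that the process terminates precisely at $\partial C$ with no leftover interior faces. I expect this to follow from strong connectedness together with the shelling axiom: at each stage the complex being collapsed deformation retracts onto $\partial C$, and the shelling order supplies the needed free faces. An alternative, possibly slicker, route is to package the whole argument through the discrete Morse function induced by the shelling, whose only critical cell on $C\setminus F_N$ is the birth vertex of $F_1$; the Morse-theoretic collapse then gives the result uniformly in both cases.
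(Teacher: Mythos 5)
Your route—remove the last facet of a shelling and peel the remaining facets in reverse order, using the fact that a $d$-simplex collapses onto any proper pure $(d-1)$-dimensional subcomplex of its boundary—is the natural one, and it is (up to presentation) the strategy of the cited reference. The paper itself only cites the result, so there is no in-text proof to compare against; evaluating your argument on its merits, the building block and the closed case are essentially sound, modulo a small indexing slip ($\partial\Delta$ has $d+1$ facets, not $d+2$) and slightly loose phrasing in the induction.

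The boundary case, however, has genuine gaps, only some of which you flag. First, the claim that every $F_i$ with $i<N$ shares a ridge with some later facet does not follow from strong connectedness of $C$: strong connectedness only says \emph{some} facet among $F_1,\ldots,F_i$ is adjacent to \emph{some} facet among $F_{i+1},\ldots,F_N$, not $F_i$ itself. What you actually need is the non-obvious fact that the reverse of a shelling of a pseudomanifold is again a shelling; from that, the reverse-shelling intersection $F_i\cap(F_{i+1}\cup\cdots\cup F_N)$ is pure $(d-1)$-dimensional and nonempty, which is exactly the statement you want. Second—and this you do not mention at all—your building block requires $K_i:=G_i\cup(\partial F_i\cap\partial C)$ to be \emph{pure} $(d-1)$-dimensional, and this is not automatic: a simplex does not collapse onto an impure subcomplex of its boundary in general (a $2$-simplex $abc$ does not collapse onto the edge $ab$ together with the isolated vertex $c$). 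One has to rule out a face $\sigma\in F_i\cap\partial C$ with $\sigma\notin G_i$ that lies in no boundary ridge of $F_i$; the cleanest argument is that the shelling of $C$ restricts to a shelling of $\mathrm{Lk}(\sigma,C)$ and, again by reverse-shelling, the \emph{first} facet of a shelling of a ball already has a boundary ridge. Third, your one-line termination argument is a reasonable gesture but leans precisely on the bookkeeping whose difficulty you acknowledge; it needs to be checked that after peeling $F_{N-1},\ldots,F_{i+1}$ the ambient complex is exactly $C_i\cup\partial C$, so that the local collapse $F_i\searrow K_i$ is valid in the ambient complex. None of these gaps indicates a wrong approach, but as written the boundary case is incomplete without the reverse-shelling ingredient and the purity check.
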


\begin{lemma}[{\cite[Theorem~3.15]{Benedetti-DMT4MWB}}]
\label{lem:homotbound}
Let $M$ be an endo-collapsible PL $d$-manifold. Let $L$ be a subcomplex of $M$, with $\dim L = \ell \le d-2$, such that all facets of $L$ lie in the interior of $M$. Then the homotopy group $\pi_{d-\ell-1} (|M| - |L|)$ has a presentation with (at most) $f_{\ell}(L)$ generators.
\end{lemma}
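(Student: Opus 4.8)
The plan is to replace $|M| - |L|$ by the complement of a regular neighbourhood of $L$ (which is controlled by the dual blocks of $\sd M$), to single out the meridians of the facets of $L$ as the desired generators, and then to feed the endocollapse of $M$ into the count so as to bound the total number of generators by $f_l(L)$.

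First I would pass to the first derived subdivision. Let $P$ be the induced subcomplex of $\sd M$ spanned by the vertices \emph{not} in $\sd L$, so that $\sd M = N \cup P$ with $N := \St(\sd L,\sd M)$ a regular neighbourhood of $L$ collapsing onto $\sd L$, and $N\cap P$ its frontier. Since all facets of $L$ are interior, $N$ meets $\partial M$ only along its collar, and a standard PL argument shows that $|M|-|L|$ deformation retracts onto $|P|$; thus it suffices to present $\pi_{d-l-1}(|P|)$ with at most $f_l(L)$ generators. For every facet $\sigma$ of $L$ — an interior $l$-face of $M$ — the dual block $D(\sigma)\subseteq\sd M$ is a PL $(d-l)$-ball whose boundary sphere $\partial D(\sigma)=\Lk(\sigma,M)$ lies in $|P|$: a face of $\partial D(\sigma)$ is a chain whose minimum strictly contains $\sigma$, hence is not a face of $L$, because nothing in $L$ strictly contains the facet $\sigma$. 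After choosing basepaths, these $(d-l-1)$-spheres give the $f_l(L)$ proposed generators $m_\sigma\in\pi_{d-l-1}(|P|)$.

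It remains to show that the $m_\sigma$ generate $\pi_{d-l-1}(|P|)$ and that no further generators are needed in a presentation; this is where endocollapsibility enters. Write the endocollapse as $M-\Delta\searrow\partial M$ (or $\searrow v$ if $\partial M=\emptyset$) and read it backwards: starting from $\partial M$, a sequence of elementary expansions builds $M-\Delta$, and finally the single $d$-face $\Delta$ is attached. Dualising this filtration inside $\sd M$ yields a handle-type decomposition of $|M|$, and so — restricting to the complement of $\sd L$ — of $|P|$, in which: the collar of $\partial M$ contributes no generator; each expansion $(\alpha,\beta)$ contributes a cancelling pair of dual handles and hence changes nothing unless it meets $L$; among the expansions meeting $L$ (those with $\beta\in L$ or $\alpha\in L$) the top-dimensional ones are exactly the $f_l(L)$ facets of $L$, each, once its dual block is complete, carrying the meridian $m_\sigma$; and the expansions through lower faces of $L$, together with the attachment of $\Delta$, only serve to glue already-built meridians together or to impose relations among them. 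Running an induction along the expansion sequence with the invariant ``$\pi_{d-l-1}$ of the current complement of $L$ is generated by the meridians of the facets of $L$ built so far'' and then collecting the relations contributed by the lower-dimensional steps and by $\Delta$ yields the claimed presentation: when $d-l-1\ge2$ one obtains an abelian group on $\le f_l(L)$ generators, and when $l=d-2$ a (possibly non-abelian) group presentation with $f_{d-2}(L)$ generators.

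The hard part is precisely this last bookkeeping: one must make rigorous that, among the dual handles read off from the collapse, only the facets of $L$ carry essential $(d-l-1)$-spheres while all lower faces of $L$ contribute only relations — and this has to be carried out while passing between $M$ and $\sd M$, keeping in mind that a collapse of $M$ need not induce a collapse of $\sd M$ (so one genuinely dualises the collapse rather than subdividing it). Two further technical points need care: organising the endocollapse so that the expansions meeting $L$ are handled coherently, given that such an expansion may also carry faces outside $L$; and the passage from ``the $m_\sigma$ normally generate'' to ``they present the group'', which is automatic for $d-l-1\ge2$ but genuinely uses the collapse when $l=d-2$. As a sanity check, in the model cases where $|M|$ is a PL ball or sphere the argument shortens drastically: $|P|$ is then $(d-l-2)$-connected by general position, and for $d-l-1\ge2$ the Hurewicz theorem together with Lefschetz duality for $N$ — giving $H_k(|M|,|P|)\cong H^{d-k}(L)$, hence, since $H_{d-l-1}(|M|)=0=H^{l+1}(L)$, an isomorphism of $H_{d-l-1}(|P|)$ with a quotient of $H^l(L)$ — already delivers the bound.
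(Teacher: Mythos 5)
Your overall strategy — pass to the complement $|P|$ of the derived neighbourhood, take the meridional $(d-l-1)$-spheres of the facets of $L$ as candidate generators, read the endocollapse $M - \Delta \searrow \partial M$ backwards and dualise it into a handle/CW decomposition of $|P|$, then observe that the only dual cells of dimension $d-l-1$ not cancelled in pairs are those lying over facets of $L$ — is the same strategy used in \cite{Benedetti-DMT4MWB}. The final sanity check via Lefschetz duality and Hurewicz for $d-l-1 \ge 2$ is also a correct and useful observation. Two comments, one minor and one pointing to a real gap.

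The minor one: your parenthetical claim that ``a collapse of $M$ need not induce a collapse of $\sd M$'' is false. If $X \searrow Y$ then $\sd X \searrow \sd Y$; this is classical and is used repeatedly in the present paper (e.g.\ in the proof of Theorem~\ref{thm:zeemanyes}). You do not rely on the claim — you dualise rather than subdivide, which is fine — but the remark should be dropped.

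The real gap is in the sentence ``the collar of $\partial M$ contributes no generator.'' The hypothesis is only that the \emph{facets} of $L$ are interior; lower faces of $L$ may lie in $\partial M$, and then $|\partial M|_P$ is a $(d-1)$-sphere minus a regular neighbourhood of a subcomplex $L' := L \cap \partial M$ of dimension up to $l-1$, which is merely $(d-l-2)$-connected — so $\pi_{d-l-1}(|\partial M|_P)$ is in general nontrivial and is the base of your filtration. This is not a hypothetical corner case: it happens in the very application in the paper, where $M = \sd^{m+1}(B \times I^n)$ and $L = \sd^{m+1}(K \times I^n)$, so $L \cap \partial M \supseteq \sd^{m+1}(K \times \partial I^n) \ne \emptyset$ for $n \ge 1$, and $\pi_1(|\partial M|_P)$ is nontrivial by Alexander duality. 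To close the gap you need to show that the classes in $\pi_{d-l-1}(|\partial M|_P)$ become redundant in $|P|$, for instance by arguing that a boundary meridian around a facet $\tau$ of $L'$ is homotopic in $|P|$ to a meridian of some interior facet $\sigma \supset \tau$ of $L$ (push it off $\partial M$ through the collar and slide it along $\St(\tau, M)$). Without this step the count ``only the facets of $L$ carry essential $(d-l-1)$-spheres'' does not yet follow from the filtration you set up. Note also that simply coning off $\partial M$ to reduce to the closed case does not help: by van Kampen, $\pi_{d-l-1}$ of the coned manifold minus $L$ is a \emph{quotient} of $\pi_{d-l-1}(|M|-|L|)$, so a generator bound there is strictly weaker.
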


\begin{theorem}\label{thm:zeemanno}
For any $m, n \ge 0$, there is a $3$-ball $B$ such that $\sd^m (B \times \mathrm{I}^n)$ is not collapsible. 
\end{theorem}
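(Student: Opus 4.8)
The plan is to exhibit, for given $m$ and $n$, a $3$-ball $B$ whose "badness" survives $m$ barycentric subdivisions and $n$ factors of $I$. The obstruction to collapsibility that I would track is a \emph{knotted arc}: recall the classical fact (going back to Bing and Goodrick) that if a $3$-ball $B$ contains a spanning arc whose knot group is sufficiently complicated — more precisely, a knot that cannot be generated by few meridians — then $B$ is not collapsible, and moreover this persists under subdivision. So the first step is to choose a knot $K$ that is so complicated it cannot be generated by $t$ meridians, for a threshold $t$ I will compute below; concretely one takes a connected sum of many copies of the trefoil, since the number of meridian generators of $\pi_1$ of a $c$-fold connected sum grows linearly in $c$. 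Then let $B$ be a $3$-ball containing a spanning edge $e$ such that $e$ together with an arc in $\partial B$ forms the knot $K$ (a "ball with knotted spanning arc").

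The heart of the argument is a dimension/homotopy count, and this is where Lemmas~\ref{lem:3ball}, \ref{lem:endcollapse}, \ref{lem:shellableEndo} and especially \ref{lem:homotbound} do the work. Set $A := B \times I^n$, a PL $(n+3)$-ball, and $A' := \sd^m A$. Suppose for contradiction that $A'$ is collapsible. I would first argue $A'$ is in fact \emph{endocollapsible}: by Lemma~\ref{lem:3ball} every proper link in $\sd^{m-1}(B\times I^n)$ is shellable, hence endocollapsible by Lemma~\ref{lem:shellableEndo}; since $A = B\times I^n$ is PL and $A'$ is collapsible (and is one more barycentric subdivision), Lemma~\ref{lem:endcollapse} upgrades collapsibility to endocollapsibility of $A'$. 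Now take $L := \sd^m(e \times \{p\})$ for a suitable interior point $p \in I^n$ — after pushing it into the interior, $L$ is a subcomplex of $A'$ of dimension $l = 1$, all of whose facets lie in the interior of $A'$. Apply Lemma~\ref{lem:homotbound} with $d = n+3$: the group $\pi_{d-l-1}(|A'| - |L|) = \pi_{n+1}(|A'|-|L|)$ admits a presentation with at most $f_1(L)$ generators.

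The final step is to compute both sides. On the one hand, $|A'| - |L|$ deformation retracts onto the complement of the knotted spanning arc in $B$ (the extra interval factors $I^n$ contribute nothing to the relevant homotopy after one notes $|B\times I^n| - |e\times\{p\}| \simeq |B| - |e|$, which in turn is homotopy equivalent to the knot complement $S^3 \setminus K$), and for $n+1 \ge 2$ one has $\pi_{n+1}$ of this $K(\pi,1)$-type space vanishing only if... — here I need to be careful: the complement $S^3\setminus K$ is aspherical, so its higher homotopy vanishes. This means the statement must instead be read off the $\pi_1$ when $n$ is such that $d - l - 1 = 1$, i.e. $n = 1$; for larger $n$ one replaces the arc $e$ by a knotted $2$-sphere or, more cleanly, keeps $l = d-2$ by thickening $L$ appropriately so that $d - l - 1 = 1$ always, forcing the fundamental group of the complement to reappear. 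So the correct choice is to take $L$ of dimension $l = d - 2 = n+1$, namely a subdivision of $e \times \partial(\text{small }n\text{-cube})$ pushed to the interior, a "knotted $(n+1)$-sphere spanning". Its complement still has $\pi_1$ equal to the knot group $\pi_1(S^3\setminus K)$ by a standard general-position / suspension argument, yet $f_{n+1}(L)$ is a fixed number not depending on $m$ or $n$ once $K$ is chosen. Choosing $K$ with meridian rank exceeding that fixed number yields the contradiction: the knot group would be generated by too few elements.

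The main obstacle I anticipate is precisely this bookkeeping of \emph{which} homotopy group Lemma~\ref{lem:homotbound} controls versus which one detects the knot, together with verifying that taking products with $I^n$ and $m$ barycentric subdivisions does not change the isomorphism type of $\pi_1$ of the relevant complement and does not change $f_{l}(L)$ except by a controlled (and crucially, $m$- and $n$-independent) amount — one must count faces of $L$ before subdivision, or rather bound the meridian rank against the \emph{original} $f_l$, exploiting that barycentric subdivision of $L$ does not make the complement's $\pi_1$ any easier. Once the quantitative inequality "meridian rank of $K$ $>$ (number of facets of the undivided spanning cell)" is arranged by taking enough connected summands, the contradiction is immediate. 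I expect the rest — the reductions to endocollapsibility via Lemmas~\ref{lem:3ball}, \ref{lem:shellableEndo}, \ref{lem:endcollapse}, and the homotopy equivalence $|B\times I^n| - |L| \simeq S^3 \setminus K$ — to be routine, if slightly tedious.
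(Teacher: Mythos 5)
Your plan is the paper's plan: upgrade a hypothetical collapse to endocollapsibility via Lemmas~\ref{lem:3ball}, \ref{lem:shellableEndo} and \ref{lem:endcollapse}, then contradict Lemma~\ref{lem:homotbound} by feeding it a knot whose group needs too many generators. You also correctly catch the pitfall that $L$ must have dimension $d-2$, so that $\pi_{d-l-1}$ is a fundamental group and not some higher (vanishing) homotopy group of an aspherical complement. But there are two problems, one minor and one fatal.

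The minor one: you apply Lemma~\ref{lem:endcollapse} the wrong way round. The lemma assumes $B$ is a collapsible PL ball with endocollapsible derived links and concludes that $\sd B$ is endocollapsible, \emph{not} that $B$ itself is. So from the assumption that $A'=\sd^m(B\times I^n)$ is collapsible you get endocollapsibility of $\sd^{m+1}(B\times I^n)$, one subdivision further in. The paper accordingly runs the whole argument inside $\sd^{m+1}(B\times I^n)$. This is fixable, but it shifts which complex you apply Lemma~\ref{lem:homotbound} to, and hence the facet count.

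The fatal one: you assert that $f_{d-2}(L)$ can be made ``a fixed number not depending on $m$ or $n$ once $K$ is chosen,'' and that barycentric subdivision changes $f_l(L)$ only by an ``$m$- and $n$-independent amount.'' Neither is true. Lemma~\ref{lem:homotbound} counts facets of the subcomplex $L$ as it sits inside the endocollapsible manifold $M=\sd^{m+1}(B\times I^n)$, so $L$ has already been barycentrically subdivided $m+1$ times and crossed with $I^n$; its facet count grows (very fast) with both $m$ and $n$. Consequently, the inequality ``meridian rank of $K >$ number of facets of the \emph{undivided} spanning cell'' is not the inequality the lemma gives you, and fixing $K$ once and for all cannot work. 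Indeed, for any fixed $B$ (always a PL $3$-ball), Corollary~\ref{cor:Stronger-bsd} guarantees $\sd^m(B\times I^n)$ becomes collapsible for $m$ large enough, so a single $K$ independent of $m,n$ would contradict that corollary. The resolution, which is the actual content of the paper's proof, is to let the complexity of the knot scale with $m$ and $n$: one first computes $N=N(m,n)$, the number of facets of $\sd^{m+1}I^{n+1}$, and then takes $K$ to be a $3$-edge knot isotopic to a connected sum of $3N$ trefoils. With $L=\sd^{m+1}(K\times I^n)$, which has exactly $3N$ facets in dimension $n+1$, Lemma~\ref{lem:homotbound} would give a presentation of $\pi_1$ with at most $3N$ generators, while Goodrick's bound forces at least $3N+1$. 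That the threshold $N$ has to be computed \emph{before} choosing $K$ is precisely the step missing from your proposal.
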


\begin{proof}
We prove the claim in two parts:
\begin{compactenum}[(1)]
\item we prove that if $\sd^{m} (B \times \mathrm{I}^n)$ is collapsible, then $\sd^{m+1} (B\times \mathrm{I}^n)$ is endo-collapsible;
\item we construct a 3-ball $B$ such that $\sd^{m+1} (B\times \mathrm{I}^n)$ is not endo-collapsible.
\end{compactenum}
The conclusion follows immediately by combining (1) and (2). So, let us prove these claims.

\begin{compactenum}[(1)]
\item Via Lemma~\ref{lem:endcollapse}, it suffices to show that the link of every proper face in $\sd^{m+1} (B\times \mathrm{I}^n)$ is endo-collapsible. In Lemma~\ref{lem:3ball} we showed that any such link is shellable; by Lemma~\ref{lem:shellableEndo}, shellability implies endo-collapsibility.
\item Let $N = N(m,n)$ be the number of facets of the complex $\sd^{m+1} \, \mathrm{I}^{n+1}$. Let $B$ be a simplicial 3-ball with a  $3$-edge subcomplex $K$, isotopic to the connected sum of $3N$ trefoil knots. (For how to construct it, see \cite{HZ}.) Then any presentation of the group $\pi_{1}(|B|-|K|)$ must have at least $3N + 1$ generators, cf.~Goodrick~\cite{GOO}. 
Now, set $M:=\sd^{m+1} (B\times \mathrm{I}^n)$ and $L:=\sd^{m+1}(K \times \mathrm{I}^{n})$. This $L$ is $(n+1)$-dimensional, and has exactly $3 N$ facets, while $M$ is $(n+3)$-dimensional. Clearly $|B|- |K|$ is a deformation retract of $|M|-|L|$, so the homotopy groups of these two spaces are the same. In particular, any presentation of $\pi_{1}(|M|-|L|)$ must have at least $3N + 1$ generators. Were $M$ endo-collapsible, by Lemma~\ref{lem:homotbound} we would obtain a presentation of $\pi_{1}(|M|-|L|)$ with (at most) $3N$ generators, a contradiction.\qedhere
\end{compactenum}
\end{proof}

\begin{cor}\label{cor:zeemanno}
For any pair of non-negative integers $m, n$, there is a contractible 2-dimensional simplicial complex $C$ such that $\sd^m (C \times \mathrm{I}^n)$ is not collapsible. 
\end{cor}

\begin{proof}
Let $B$ be any $3$-ball for which $\sd^m (B \times \mathrm{I}^n)$ is not collapsible; for example, the one constructed in Theorem~\ref{thm:zeemanno}. Every polytopal $d$-ball collapses to a $(d-1)$-dimensional subcomplex, so that $B$ collapses to some $2$-complex $S$. The complex $S$ is a deformation retract of $B$ and therefore contractible. Moreover, since $B$ collapses onto $S$, $B \times \mathrm{I}^n$ collapses onto $S \times \mathrm{I}^n$ and $\sd^m (B \times \mathrm{I}^n)$ collapses onto $\sd^m (C \times \mathrm{I}^n)$. Were the latter complex collapsible, $\sd^m (B \times \mathrm{I}^n)$ would be collapsible as well, a contradiction.
\end{proof}

\begin{rem}
While the above Theorems use knot theory to provide complexes that are far from collapsible, one can also use the algorithmic undecidability. Indeed, were there universal constants $m, n$ such that for every contractible simplicial complex $C$ the complex $\sd^m (C \times \mathrm{I}^n)$ is collapsible, then by checking collapsibility of $\sd^m(C \times \mathrm{I}^n)$, which can be done by checking all possible collapsing sequences, we would have an algorithm to decide the contractibility of simplicial complexes, and in particular of triangulated manifolds.

Recall now that Novikov \cite{Novikov} proved that it is not decidable whether a given manifold with the homology of the $5$-sphere is actually the $5$-sphere. Now, a triangulated manifold $M$ with the homology of $S^5$ is homeomorphic to $S^5$ if and only if for any $5$-ball $B$ in $M$, the manifold with boundary $M{\sm\setminus\sm} B$ is contractible. Hence, deciding whether a given $5$-manifold is contractible is not decidable by any algorithm. This contradicts the soundness of the proposed algorithm above, and it follows that no universal constants $m$, $n$ exist such that contractibility of $C$ implies the collapsibility of $\sd^m(C \times \mathrm{I}^n)$.

Notice that this approach does not give us a $2$-complex, but simplicial complexes of higher dimension. It is not known to the authors whether the contractibility of $2$-complexes is decidable.
\end{rem}

 \paragraph*{Acknowledgments}
 We thank Micha\l{} Adamaszek for a useful suggestions and Anders Bj\"orner for informing us of Oliver's conjecture. A crucial part~of the research leading to this paper has been conducted during the first author's visit to the Hebrew University, Jerusalem. Both authors wish to thank the Institut Mittag-Leffler for the great hospitality during a summer program in 2012, organized jointly with Alexander Engstr\"om.

{\small 
\bibliographystyle{myamsalpha}
\bibliography{shell}

\providecommand{\noopsort}[1]{}
\providecommand{\bysame}{\leavevmode\hbox to3em{\hrulefill}\thinspace}
\providecommand{\MR}{\relax\ifhmode\unskip\space\fi MR }
\providecommand{\MRhref}[2]{%
  \href{http://www.ams.org/mathscinet-getitem?mr=#1}{#2}
}
\providecommand{\href}[2]{#2}
\begin{thebibliography}{VKF74}

\bibitem[Adi13]{AD2013}
K.~A. Adiprasito, \emph{Methods from {D}ifferential {G}eometry in {P}olytope
  {T}heory}, Berlin, DE, May 2013.

\bibitem[AB11]{KarimBrunoMG&C}
K.~A. Adiprasito and B.~Benedetti, \emph{Metric geometry, convexity and
  collapsibility}, preprint, available at
  \href{http://arxiv.org/abs/1107.5789}{arXiv:1107.5789}.

\bibitem[BM12]{MiniaBarmak}
J.~A. Barmak and E.~G. Minian, \emph{Strong homotopy types, nerves and
  collapses}, Discrete Comput. Geom. \textbf{47} (2012), 301--328.

\bibitem[Ben12]{Benedetti-DMT4MWB}
B.~Benedetti, \emph{Discrete {M}orse theory for manifolds with boundary},
  Trans. Amer. Math. Soc. \textbf{364} (2012), 6631--6670.

\bibitem[Bin64]{Bing}
R.~H. Bing, \emph{Some aspects of the topology of {$3$}-manifolds related to
  the {P}oincar\'e conjecture}, Lectures on modern mathematics, {V}ol. {II},
  Wiley, New York, 1964, pp.~93--128.

\bibitem[Bj{\"o}95]{BT}
A.~Bj{\"o}rner, \emph{Topological methods}, Handbook of {C}ombinatorics,
  {V}ol.\ 1,\ 2, Elsevier, Amsterdam, 1995, pp.~1819--1872.

\bibitem[BW96]{BW}
A.~Bj{\"o}rner and M.~L. Wachs, \emph{Shellable nonpure complexes and posets.
  {I}}, Trans. Amer. Math. Soc. \textbf{348} (1996), 1299--1327.

\bibitem[BM71]{BruggesserMani}
H.~Bruggesser and P.~Mani, \emph{Shellable decompositions of cells and
  spheres}, Math. Scand. \textbf{29} (1971), 197--205 (1972).

\bibitem[Chi67]{CHIL}
D.~R.~J. Chillingworth, \emph{Collapsing three-dimensional convex polyhedra},
  Proc. Cambridge Philos. Soc. \textbf{63} (1967), 353--357, Correction in
  Math. Proc. Cambridge Philos. Soc. 88 (1980), {\bf 2}, 307–310.

\bibitem[Coh73]{CohenM}
M.~M. Cohen, \emph{A course in {S}imple-{H}omotopy {T}heory}, Springer-Verlag,
  New York, 1973, Graduate Texts in Mathematics, Vol. 10.

\bibitem[Coh75]{Cohen}
\bysame, \emph{Dimension estimates in collapsing {$X\times I^{q}$}}, Topology
  \textbf{14} (1975), 253--256.

\bibitem[{\v{C}}D07]{CD}
S.~L. {\v{C}}uki{\'c} and E.~Delucchi, \emph{Simplicial shellable spheres via
  combinatorial blowups}, Proc. Amer. Math. Soc. \textbf{135} (2007), no.~8,
  2403--2414 (electronic). \MR{2302561 (2008b:55027)}

\bibitem[DK74]{DK}
G.~Danaraj and V.~Klee, \emph{Shellings of spheres and polytopes}, Duke
  Mathematical Journal \textbf{41} (1974), 443--451.

\bibitem[DM99]{DM-NP}
M.~W. Davis and G.~Moussong, \emph{Notes on nonpositively curved polyhedra}, in
  ``Low-dimensional topology'' ({E}ger 1996/{B}udapest 1998), Bolyai Soc. Math.
  Stud., vol.~8, J\'anos Bolyai Math. Soc., Budapest, 1999, pp.~11--94.

\bibitem[Die68]{Dierker}
P.~Dierker, \emph{Note on collapsing {$K\times I$} where {$K$} is a
  contractible polyhedron}, Proc. Amer. Math. Soc. \textbf{19} (1968),
  425--428.

\bibitem[EG83]{EDG}
R.~Edwards and D~Gillman, \emph{Any spine of the cube is {$2$}-collapsible},
  Canad. J. Math. \textbf{35} (1983), 43--48.

\bibitem[GR83]{Gillman}
D.~Gillman and D.~Rolfsen, \emph{The {Z}eeman conjecture for standard spines is
  equivalent to the {P}oincar\'e conjecture}, Topology \textbf{22} (1983),
  315--323.

\bibitem[Gla70]{Glaser}
L.~C. Glaser, \emph{Geometrical combinatorial topology, i.}, Van Nostrand
  Reinhold Mathematical Studies {Vol. 27.} New York, 1970.

\bibitem[Goo68]{GOO}
R.~E. Goodrick, \emph{Non-simplicially collapsible triangulations of
  {$I^{n}$}}, Proc. Cambridge Philos. Soc. \textbf{64} (1968), 31--36.

\bibitem[Gr{\"u}03]{Grun}
B.~Gr{\"u}nbaum, \emph{Convex polytopes}, second ed., Graduate Texts in
  Mathematics, vol. 221, Springer-Verlag, New York, 2003.

\bibitem[HZ00]{HZ}
M.~Hachimori and G.~M. Ziegler, \emph{Decompositons of simplicial balls and
  spheres with knots consisting of few edges}, Math. Z. \textbf{235} (2000),
  159--171.

\bibitem[Hud69]{Hudson}
J.~F.~P. Hudson, \emph{Piecewise {L}inear {T}opology}, University of Chicago
  Lecture Notes, W. A. Benjamin, Inc., New York-Amsterdam, 1969.

\bibitem[KSS84]{KahnSaksSturtevant}
J.~Kahn, M.~Saks, and D.~Sturtevant, \emph{A topological approach to
  evasiveness}, Combinatorica \textbf{4} (1984), 297--306.

\bibitem[Lic70]{Lickorish}
W.~B.~R. Lickorish, \emph{On collapsing {$X^{2}\times I$}}, Topology of
  {M}anifolds ({P}roc. {I}nst., {U}niv. of {G}eorgia, {A}thens, {G}a., 1969),
  Markham, Chicago, Ill., 1970, pp.~157--160.

\bibitem[Lic91]{LME}
\bysame, \emph{Unshellable triangulations of spheres}, European J. Combin.
  \textbf{12} (1991), 527--530.

\bibitem[Lic99]{LSM}
\bysame, \emph{Simplicial moves on complexes and manifolds}, Proceedings of the
  {K}irbyfest ({B}erkeley, {CA}, 1998), Geom. Topol. Monogr., vol.~2, Geom.
  Topol. Publ., Coventry, 1999, pp.~299--320 (electronic).

\bibitem[MR93]{MatveevRolfsen}
S.~Matveev and D.~Rolfsen, \emph{Zeeman's collapsing conjecture},
  Two-dimensional {H}omotopy and {C}ombinatorial {G}roup {T}heory, London Math.
  Soc. Lecture Note Ser., vol. 197, Cambridge Univ. Press, Cambridge, 1993,
  pp.~335--364.

\bibitem[Pac87]{Pachner}
U.~Pachner, \emph{Konstruktionsmethoden und das kombinatorische
  {H}om\"oomorphieproblem f\"ur {T}riangulationen kompakter semilinearer
  {M}annigfaltigkeiten}, Abh. Math. Sem. Univ. Hamburg \textbf{57} (1987),
  69--86.

\bibitem[PB80]{PB}
J.~S. Provan and L.~J. Billera, \emph{Decompositions of simplicial complexes
  related to diameters of convex polyhedra}, Math. Oper. Res. \textbf{5}
  (1980), 576--594.

\bibitem[RS72]{RourkeSanders}
C.~P. Rourke and B.~J. Sanderson, \emph{Introduction to {P}iecewise-{L}inear
  {T}opology}, Springer, New York, 1972, Ergebnisse Series vol.\ 69.

\bibitem[Rud58]{Rudin}
M.~E. Rudin, \emph{An unshellable triangulation of a tetrahedron}, Bull. Amer.
  Math. Soc. \textbf{64} (1958), 90--91.

\bibitem[Sch81]{Schenzel}
P.~Schenzel, \emph{On the number of faces of simplicial complexes and the
  purity of {F}robenius}, Math. Z. \textbf{178} (1981), no.~1, 125--142.
  \MR{627099 (82k:13022)}

\bibitem[Sta83]{Stanley}
R.~P. Stanley, \emph{Combinatorics and {C}ommutative {A}lgebra}, Progress in
  Mathematics, vol.~41, Birkh\"auser Boston Inc., Boston, MA, 1983.

\bibitem[VV06]{VV}
G.~C. Verchota and A.~L. Vogel, \emph{The multidirectional {N}eumann problem in
  {$\mathbb{R}^4$}}, Math. Ann. \textbf{335} (2006), 571--644.

\bibitem[VKF74]{Novikov}
I.~A. Volodin, V.~E. Kuznecov, and A.~T. Fomenko, \emph{The problem of the
  algorithmic discrimination of the standard three-dimensional sphere}, Uspehi
  Mat. Nauk \textbf{29} (1974), 71--168, Appendix by S. P. Novikov.

\bibitem[Wel99]{Welker}
V.~Welker, \emph{Constructions preserving evasiveness and collapsibility},
  Discrete Math. \textbf{207} (1999), 243--255.

\bibitem[Whi39]{Whitehead}
J.~H.~C. Whitehead, \emph{Simplicial {S}paces, {N}uclei and m-{G}roups}, Proc.
  London Math. Soc. \textbf{S2-45} (1939), 243.

\bibitem[Zee66]{ZeemanBK}
E.~C. Zeeman, \emph{Seminar on {C}ombinatorial {T}opology.}, Institut des
  Hautes Etudes Scientifiques, Paris, 1966 (English).

\bibitem[Zie95]{Z}
G.~M. Ziegler, \emph{Lectures on {P}olytopes}, Graduate Texts in Mathematics,
  vol. 152, Springer-Verlag, New York, 1995.

\end{thebibliography}
}

\end{document}